\newtheorem{theo}{Theorem}[section]
\newtheorem{lemma}[theo]{Lemma}
\newtheorem{definition}[theo]{Definition}
\newtheorem{cor}[theo]{Corollary}
\newtheorem{conj}[theo]{Conjecture}
\newtheorem{prop}[theo]{Proposition}
\newtheorem{q}[theo]{Question}
\title{Coxeter groups as Beauville groups}
\author{Ben Fairbairn}
\address{Ben Fairbairn, Department of Economics, Mathematics and Statistics, Birkbeck, University of London, Malet Street, London WC1E 7HX, United Kingdom}
\email{b.fairbairn@bbk.ac.uk}
\begin{document}
\maketitle

\begin{abstract}
We generalize earlier work of Fuertes and Gonz\'{a}lez-Diez as well as earlier work of Bauer, Catanese and Grunewald by classifying which of the irreducible Coxeter groups are (strongly real) Beauville groups. We also make partial progress on the much more difficult question of which Coxeter groups are Beauville groups in general as well as discussing the related question of which Coxeter groups can be used in the construction of mixed Beauville groups.
\end{abstract}

Keywords: Coxeter group, Beauville group, Beauville structure, Beauville surface, real surface

2000 Mathematics Subject Classification: 20F55, 14J29, 30F50, 14J10, 14M99, 20D99, 30F99

\textbf{Acknowledgement} The author wishes to express his deepest gratitude to his colleague Professor Sarah Hart and to Professor Christopher Parker for extremely helpful comments on earlier drafts of this work.

\section{Introduction}\label{Sec1}
We begin with our main definition.

\begin{definition}\label{MainDef}
Let $G$ be a finite group and for $g,h\in G$ let
\[
\Sigma(g,h):=\bigcup_{i=1}^{|G|}\bigcup_{k\in G}\{(g^i)^k,(h^i)^k,((gh)^i)^k\}.
\]

A set of elements $\{\{x_1,y_1\},\{x_2,y_2\}\}\subset G\times G$ is an
\emph{unmixed Beauville structure of} $G$ if and only if $\langle
x_1,y_1\rangle=\langle x_2,y_2\rangle=G$ and
\begin{equation}
\Sigma(x_1,y_1)\cap\Sigma(x_2,y_2)=\{e\}.\tag{$\dagger$}
\end{equation}

If $G$ has an unmixed
Beauville structure then we call $G$ an \emph{unmixed Beauville
group} or simply a \emph{Beauville group}.
\end{definition}

In some places in the literature authors have stated the above definition in terms of \emph{spherical systems of generators} of length $3$, meaning a triple (as opposed to a pair) of generators $\{x,y,z\}$ with $xyz=e$, but we omit $z=(xy)^{-1}$ from our notation here. Furthermore, many earlier papers on Beauville structures add the condition that for $i=1,2$ we have
\[
o(x_i)^{-1}+o(y_i)^{-1}+o(x_iy_i)^{-1}<1,
\]
but this condition was subsequently found to be unnecessary \cite{BCG1}.

Beauville groups were originally introduced in connection with a class of complex surfaces of general type known as Beauville surfaces. Very roughly speaking these surfaces are defined by taking a product of two Riemann surfaces $\mathcal{C}\times\mathcal{C}'$ and quotienting out the action of a finite group $G$ on this product giving us the variety $(\mathcal{C}\times\mathcal{C}')/G$. These surfaces possess many useful geometric properties: their automorphism groups \cite{Jonesauts} and fundamental groups \cite{Catnese} are relatively easy to compute and these surfaces are rigid surfaces in the sense of admitting no non-trivial deformations \cite{BCG2} and thus correspond to isolated points in the moduli space of surfaces of general type. As a consequence they are useful for testing conjectures and providing cheap counterexamples (most notably the Friedman-Morgan ``speculation" --- see \cite[Section 7]{BCG2} for details). Several authors have produced excellent surveys on these and closely related matters in recent years including \cite{BCPSurvey,FSurvey,JonesSurvey,S,WolfartSurvey}.

Complex surfaces sometimes have an additional structure analogous to complex conjugation. In the case of Beauville surfaces, the existence of such a function can be encoded in terms of the Beauville structure as follows.

\begin{definition}
 Let $G$ be a Beauville group and let $X =\{\{x_1, y_1\},\{x_2, y_2\}\}$ be a
Beauville structure for $G$. We say that $G$ and $X$ are \emph{strongly real} if there exists an
automorphism $\phi\in\mbox{Aut}(G)$ and elements $g_i\in G$ for $i = 1, 2$
such that
\[
g_i\phi(x_i)g_i^{-1}=x_i^{-1}\mbox{ and }g_i\phi(y_i)g_i^{-1}=y_i^{-1}
\]
for $i=1,2$.
\end{definition}

In practice we often take $g_1=g_2=e$ in this definition.

In \cite[Section 3.2]{BCG1} Bauer, Catanese and Grunewald pose the general question ``which [finite] groups admit Beauville structures?" as well as the more difficult question of which Beauville groups are strongly real Beauville groups. Many authors have investigated these questions for several classes of finite groups including abelian groups \cite{Catnese}; symmetric groups \cite{BCG1,FG} as well as decorations of simple groups more generally \cite{FExcep,FJ,FMP1,FMP2,GLL,GM}; characteristically simple groups \cite{MoreF,NewcProc,Jones1,Jones2} and nilpotent groups \cite{BBF,BBPV1,BBPV2,MoreF,new,SV} (this list of references is by no means exhaustive!) Here we extend this list by considering Coxeter groups. First we classify which of the irreducible Coxeter groups are (strongly real) Beauville groups.

\begin{theo}\label{MainThm}
Every finite irreducible Coxeter group is a strongly real Beauville group aside from the groups of type:
\begin{enumerate}
\item[(a)] \emph{A}$_n$ for $n\leq3$;
\item[(b)] \emph{B}$_n$ for $n\leq4$;
\item[(c)] \emph{D}$_n$ for $n\leq4$;
\item[(d)] \emph{F}$_4$ and \emph{H}$_3$ and
\item[(e)] \emph{I}$_2(k)$ for any $k$.
\end{enumerate}
\end{theo}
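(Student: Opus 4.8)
The plan is to work through the finite irreducible Coxeter groups type by type, using their explicit structure as reflection groups (or, where available, their well-known isomorphisms to symmetric groups, signed-permutation groups, and the exceptional groups $E_6$, $E_7$, $E_8$, $F_4$, $H_3$, $H_4$). For each infinite family I would exhibit an explicit pair of generating pairs $\{x_1,y_1\}$ and $\{x_2,y_2\}$ and verify the Beauville condition $(\dagger)$ by controlling the orders of $x_i$, $y_i$ and $x_iy_i$ together with their conjugacy classes; since $\Sigma(g,h)$ collects conjugates of all powers of $g$, $h$ and $gh$, the cleanest way to force $\Sigma(x_1,y_1)\cap\Sigma(x_2,y_2)=\{e\}$ is to arrange that the nontrivial elements arising from the first pair and those from the second pair lie in disjoint sets of conjugacy classes — most easily by choosing the two triples of orders $(o(x_i),o(y_i),o(x_iy_i))$ to be setwise coprime, so that no nontrivial power of one generator can be conjugate to a power of any generator from the other structure.

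For the small-rank exceptional exclusions in items (a)--(e) I would argue non-existence directly. The groups $A_1,A_2,A_3$, $B_2,B_3,B_4$, $D_4$, $F_4$, $H_3$ and every dihedral group $I_2(k)$ are small enough to treat by hand or by a short finite computation: either they are solvable/too small to contain two ``coprime-order'' generating pairs, or, as with the symmetric groups $S_n=A_{n-1}$, one can invoke the already-known classification (the excerpt cites Bauer--Catanese--Grunewald and Fuertes--Gonz\'alez-Diez for $S_n$) to rule out the listed cases and establish Beauville-ness beyond them. The dihedral groups $I_2(k)$ are never Beauville because any two generators of a dihedral group include a reflection, and the reflections together with the rotation subgroup cannot be partitioned into two $\Sigma$-sets meeting only in $e$; this is a clean general obstruction that disposes of case (e) uniformly.

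For the strong reality half of the statement, I would exploit the defining feature of Coxeter groups: every element is conjugate to its inverse (Coxeter groups are \emph{real} groups, since each generator is an involution and a standard argument shows every element is strongly real), so the inversion $g_i\phi(x_i)g_i^{-1}=x_i^{-1}$ can frequently be achieved with $\phi$ the identity and a single conjugating element, exactly as the remark following the definition anticipates with $g_1=g_2=e$. The main work is to check that one conjugating element $g_i$ simultaneously inverts both $x_i$ and $y_i$ (not merely each separately); for generators chosen as products of reflections this can be read off from the geometry of the associated root system or from the explicit permutation action. Where the identity automorphism does not suffice I would use a diagram automorphism (for type $D_n$ or $A_n$) as $\phi$.

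The hard part, I expect, will not be the generic large-rank families but rather pinning down the exact boundary cases in (a)--(d): one must verify both that the smallest groups listed genuinely fail to be Beauville (a non-existence argument, typically the most delicate, since one has to rule out \emph{all} pairs of generating pairs) and that the next group in each family already succeeds, so that the classification is sharp. Establishing the simultaneous inversion needed for strong reality in the borderline and exceptional groups $H_4$, $E_6$, $E_7$, $E_8$ is the other delicate point, and there I anticipate leaning on explicit computation or on structural facts about these specific Weyl groups.
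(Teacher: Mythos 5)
Your overall architecture (explicit generating pairs for the infinite families, explicit matrices plus computation for the exceptional groups, citation of known results for $A_n$ and $I_2(k)$, and conjugating involutions for strong reality) matches the paper's, but your central mechanism for verifying condition ($\dagger$) cannot work for Coxeter groups. You propose to arrange the two order-triples $(o(x_i),o(y_i),o(x_iy_i))$ to be setwise coprime. However, the abelianisation of every Coxeter group is a nontrivial elementary abelian $2$-group, and for any generating pair $\{x,y\}$ the images of $x$, $y$ and $xy$ in $G/G'$ cannot all be trivial; a short check of the cases shows that at least two of $x$, $y$, $xy$ always lie outside $G'$ and hence have even order (for type B$_n$, where $G/G'\cong 2^2$, all three do). So both triples always contain even orders, coprimality is unattainable, and in particular each $\Sigma(x_i,y_i)$ necessarily contains involutions --- the whole difficulty is to show these involutions (and other even-order elements) fall into different conjugacy classes. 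The paper's substitute is the monomial reflection representation: conjugate matrices have equal traces, so one computes the traces of all powers of $x_i$, $y_i$, $x_iy_i$ and checks they differ between the two systems, supplemented by the observation that a diagonal matrix can only be conjugate to a diagonal matrix (this separates, e.g., the involution with one sign change arising from the first pair from the involution with three sign changes arising from the second). You would need this, or some equivalent class-separating device, to make the B$_n$ and D$_n$ cases go through.

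A secondary gap is in the exclusions (a)--(d): ``too small to treat by hand'' is not the operative reason. $W(\mathrm{H}_3)\cong 2\times\mathrm{Alt}(5)$ has order $120$ and fails for a structural reason --- in any generating pair of $2\times\mathrm{Alt}(5)$ some element of the triple powers up to the central involution, which therefore lies in both $\Sigma$-sets; $W(\mathrm{F}_4)$ (order $1152$) fails after an analysis of which pairs of orders can generate it given that its abelianisation is the Klein four-group and its three index-$2$ subgroups constrain the available classes of elements of orders $4$ and $6$. Neither argument follows from size alone, and without them your claimed boundary in (d) is unsupported. Your remarks on strong reality are essentially sound (every element of a finite Coxeter group is indeed strongly real, and the genuine issue is finding one element inverting $x_i$ and $y_i$ simultaneously, which the paper settles with explicit involutions $t_i$), and your dihedral obstruction for (e) is in the right spirit, though the clean statement is that each generating triple of $I_2(k)$ must contain a rotation generating the full rotation subgroup, forcing the two $\Sigma$-sets to share nontrivial rotations.
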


Whilst the classification of strongly real Beauville Coxeter groups is in general somewhat more tricky (see the comment in Section \ref{proofs}), we can at least get most of the way.

\begin{cor}\label{MainCor}
No product of three or more irreducible Coxeter groups is a Beauville group. Furthermore, $K_1\times K_2$ is a strongly real Beauville group if $K_1$ and $K_2$ are strongly real irreducible Coxeter Beauville groups not of type B$_n$.
\end{cor}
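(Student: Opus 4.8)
The plan is to handle the two assertions separately, in both cases exploiting the sign homomorphism $\epsilon\colon W\to\{\pm1\}$ that sends every reflection to $-1$; this is well defined on any Coxeter group $W$ and is onto whenever $W$ is nontrivial.

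For the first assertion I would argue that a product of three or more irreducible factors fails to be $2$-generated, and hence cannot carry a Beauville structure, whose defining pairs $\{x_i,y_i\}$ must generate the whole group. Writing $G=G_1\times\cdots\times G_n$ with each $G_j$ a nontrivial irreducible Coxeter group, the maps $\epsilon_j$ assemble into a surjection $G\twoheadrightarrow(\mathbb{Z}/2\mathbb{Z})^n$. Since $(\mathbb{Z}/2\mathbb{Z})^n$ needs $n$ generators, $G$ cannot be generated by fewer than $n$ elements; for $n\geq3$ this already rules out $2$-generation, and the claim follows. The same bookkeeping explains the exclusion of type B in the second assertion: inspecting Coxeter diagrams, the abelianization of an irreducible finite Coxeter group is $(\mathbb{Z}/2\mathbb{Z})^{c}$, where $c$ is the number of connected components of the graph obtained by deleting the edges with even label; this gives $c=2$ exactly for B$_n$, F$_4$ and I$_2(k)$ with $k$ even, and $c=1$ for every other type. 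A factor with $c=2$ would push the rank of the abelianization of $K_1\times K_2$ up to at least $3$, again destroying $2$-generation.

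For the second assertion, let $\{\{a_i,b_i\},\{c_i,d_i\}\}$ be a strongly real Beauville structure on $K_i$ for $i=1,2$, witnessed by automorphisms $\phi_i$ and conjugators. I would build a structure on $K_1\times K_2$ by pairing these componentwise, for instance taking the two generating pairs to be $\{(a_1,a_2),(b_1,b_2)\}$ and $\{(c_1,c_2),(d_1,d_2)\}$, allowing if needed a crossed pairing or the replacement of a generator by a suitable conjugate or inverse. The disjointness condition $(\dagger)$ on the product then reduces to the two componentwise conditions: if a conjugate of a power of one of the product generators equals a conjugate of a power of a generator from the other structure, then projecting to $K_1$ and to $K_2$ and invoking $\Sigma$-triviality in each factor forces both coordinates, and hence the element itself, to be trivial. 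Strong reality transfers directly, since the automorphism $(\phi_1,\phi_2)$ together with the componentwise conjugators inverts each product generator.

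The main obstacle is therefore generation: I must ensure that each combined pair actually generates $K_1\times K_2$ rather than a proper subdirect product. By Goursat's lemma the obstructions correspond to common quotients of $K_1$ and $K_2$. The abelian part of this obstruction is exactly the diagonal $\mathbb{Z}/2\mathbb{Z}$ coming from the sign maps, and since both factors now have abelianization $\mathbb{Z}/2\mathbb{Z}$ (type B being excluded), it suffices to arrange that $(\epsilon_1(a_1),\epsilon_2(a_2))$ and $(\epsilon_1(b_1),\epsilon_2(b_2))$ span $(\mathbb{Z}/2\mathbb{Z})^2$, which can be enforced by the permitted adjustments above. The remaining, nonabelian common quotients---most seriously when $K_1$ and $K_2$ share an isomorphism type---must be defeated by choosing the two component structures, and their pairing, so that no diagonal subgroup relative to a common quotient can contain a combined pair; this is the step I expect to require the most care, and where the freedom built into Theorem~\ref{MainThm}, namely several inequivalent generating pairs together with the strongly-real twists, is what makes the construction go through.
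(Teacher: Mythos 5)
Your treatment of the first assertion, and of the exclusion of type B$_n$, coincides with the paper's: the sign homomorphisms assemble into a surjection from a product of $n$ irreducible factors onto $(\mathbb{Z}/2\mathbb{Z})^n$ (already onto $(\mathbb{Z}/2\mathbb{Z})^3$ if one of two factors has Klein four-group abelianisation), so the product is not $2$-generated and a fortiori not a Beauville group. Your reduction of condition ($\dagger$) to the two factors is also sound, since $\Sigma$ of a coordinatewise pair lies inside the product of the coordinate $\Sigma$'s and the factor intersections are trivial, and strong reality does transfer via $(\phi_1,\phi_2)$ together with coordinatewise conjugators. All of this matches the paper.

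The gap is at the generation step, which you correctly identify as the crux and then do not carry out. Saying that the nonabelian Goursat obstructions ``must be defeated by choosing the two component structures, and their pairing, so that no diagonal subgroup relative to a common quotient can contain a combined pair'' restates what has to be proved rather than proving it: when $K_1$ and $K_2$ are isomorphic (say both of type D$_n$), the naive componentwise pairing of a structure with itself generates only the diagonal, and even after crossing one must still rule out every proper common quotient identifying the two chosen pairs. The paper discharges this by writing down a specific crossed pairing, $\{\{(x_1,y_2^{-1}),(y_1,x_2^{-1})\},\{(x_2,y_1^{-1}),(y_2,x_1^{-1})\}\}$, built from the explicit structures of Sections \ref{DnSec} and \ref{ExSec} (and of \cite[Lemma 19]{NewcProc} for type A$_n$), which were designed so that wherever possible $x_i$ lies outside and $y_i$ inside the derived subgroup, and by asserting generation for those concrete elements. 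Until you either verify generation for concrete structures or give a general argument excluding all nonabelian common quotients, the second assertion is not established.
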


As a consequence of the proof of Theorem \ref{MainThm} we have the following.

\begin{cor}
An irreducible Coxeter group is a Beauville group if and only if it is a strongly real Beauville group.
\end{cor}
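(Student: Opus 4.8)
The forward implication is immediate: by definition a strongly real Beauville group is in particular a Beauville group, so nothing need be proved there. For the converse I would argue contrapositively and lean entirely on Theorem \ref{MainThm}. That theorem asserts that an irreducible Coxeter group fails to be a \emph{strongly real} Beauville group precisely when it is one of the groups listed in (a)--(e). Consequently, to conclude that ``Beauville'' and ``strongly real Beauville'' coincide for irreducible Coxeter groups, it suffices to check that each of the exceptional groups in (a)--(e) is in fact not a Beauville group at all. Granting this, if an irreducible Coxeter group $G$ is a Beauville group then $G$ does not appear in the list, whence Theorem \ref{MainThm} makes it strongly real; the reverse containment of the two classes is the trivial implication above.

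The whole task therefore reduces to ruling out Beauville structures on the groups of type (a)--(e), and I expect this to be exactly what the proof of Theorem \ref{MainThm} already delivers when disposing of these cases: the cleanest way to certify that a small group is not a strongly real Beauville group is usually to certify the stronger fact that it admits no Beauville structure whatsoever. For the single infinite family, the dihedral groups $I_2(k)$, I would give a uniform argument rather than a case check. Writing $I_2(k)=\langle r,s\rangle$ with $r$ the rotation of order $k$ generating the cyclic normal subgroup $R=\langle r\rangle$ and $s$ a reflection, one observes that any generating pair $\{x,y\}$ of $I_2(k)$ must, after discarding the abelian (cyclic) possibilities, contain a reflection, and that whichever of $x$, $y$, $xy$ happens to be a rotation is forced to generate $R$. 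Since conjugation preserves $R$ and the powers of a generator of $R$ exhaust $R$, every such generating pair satisfies $R\subseteq\Sigma(x,y)$. Hence for any two generating pairs we have $R\setminus\{e\}\subseteq\Sigma(x_1,y_1)\cap\Sigma(x_2,y_2)$, so condition $(\dagger)$ fails and $I_2(k)$ is never a Beauville group.

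The remaining exceptional groups --- the symmetric groups $\mathrm{A}_n=S_{n+1}$ for $n\le 3$ together with the finitely many groups $\mathrm{B}_n,\mathrm{D}_n$ for $n\le 4$ and $\mathrm{F}_4,\mathrm{H}_3$ --- form a fixed finite list, and for each I would exhibit the same kind of obstruction: a nontrivial element, or an entire normal subgroup, that is unavoidably contained in $\Sigma(x,y)$ for every generating pair $\{x,y\}$, forcing $(\dagger)$ to fail. The degenerate members ($\mathrm{A}_1$, which is cyclic, and the smallest orders) are non-Beauville for elementary reasons, and the rest are small enough to be settled by the direct analysis of generating pairs carried out in the proof of Theorem \ref{MainThm} (or, if one prefers, by a short computer computation). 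The main obstacle is not any single case but ensuring that the arguments disposing of (a)--(e) genuinely produce the non-existence of \emph{any} Beauville structure rather than merely the absence of a strongly real one; once that stronger statement is in hand for each exceptional type, the corollary follows immediately.
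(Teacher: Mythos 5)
Your proposal is correct and follows essentially the same route as the paper: the corollary is recorded there as an immediate consequence of the proof of Theorem \ref{MainThm}, precisely because every exceptional case in (a)--(e) is disposed of by showing the group admits no Beauville structure at all (by computation for small B$_n$ and D$_n$, by the explicit arguments for F$_4$ and H$_3$, and by citation to \cite{FG} and \cite{BCG1} for A$_n$ and I$_2(k)$), while the positive cases are all shown to be strongly real. Your self-contained dihedral argument that $R\setminus\{e\}\subseteq\Sigma(x_1,y_1)\cap\Sigma(x_2,y_2)$ is a valid expansion of the observation the paper attributes to \cite[Lemma 3.7]{BCG1}.
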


For the basic definitions and notation for Coxeter groups used throughout this paper we refer the reader to \cite{Hump}.

This paper is organised as follows. In Sections 2 and 3 we will prove Theorem \ref{MainThm} in the cases of types B$_n$ and D$_n$ respectively before going onto to discuss the exceptional groups in Section 4. In Section 5 we pull together the work of the earlier parts to complete the proof of Theorem \ref{MainThm} and its corollaries along with a discussion as to why it is difficult to extend these results further. Finally in Section 6 we discuss mixed and mixable Beauville groups, in which the curves $\mathcal{C}$ and $\mathcal{C}'$ are interchanged by the action of the group on $\mathcal{C}\times\mathcal{C}'$, that we will define more precisely there.

\section{Coxeter groups of type B$_n$}\label{BnSec}

In this section we prove Theorem \ref{MainThm} for the groups of type B$_n$.

Recall that matrix is `monomial' if it has precisely one non-zero entry in each row and column. For any positive integer $n$ the Coxeter group $W($B$_n)$ has a degree $n$ irreducible ordinary representation in which every element of the group is represented by a monomial matrix in which all of the non-zero entries are equal to $\pm1$. In particular, this means that elements of the group can be represented by expressing a matrix as a permutation with an indication of which columns contain a $-1$ rather than a $1$. We do this by underlining any number corresponding to a column containing a $-1$. This can sometimes lead to a slightly odd way of expressing a permutation with cycles of length one being included to indicate an entry of the matrix equal to $-1$ whilst not including other cycles of length one. For example, we write
\[
(1,\underline{2},3)(\underline{5})\mbox{ for }\left(\begin{array}{rrrrr}
\cdot&\cdot&1&\cdot&\cdot\\
1&\cdot&\cdot&\cdot&\cdot\\
\cdot&-1&\cdot&\cdot&\cdot\\
\cdot&\cdot&\cdot&1&\cdot\\
\cdot&\cdot&\cdot&\cdot&-1
\end{array}\right).
\]

In this section and the next we shall repeatedly make use of the following recent general result of Jones \cite[Proposition 6.1]{Jones1} (the special case in which the length of the cycle must be a prime number is a clasical result due to Jordan, itself generalised by a more recent result of Neumann in which the length of the cycle can be a prime power).

\begin{lemma}\label{JonesLem}
Let $H$ be a primitive permutation group of degree $n$. If $H$ contains a cycle of length $m$ where $2\leq m\leq n-3$, then $H\geq Alt(n)$.
\end{lemma}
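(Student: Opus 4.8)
The plan is to exploit the strong hypothesis that the $m$-cycle $c$ fixes exactly $f := n - m \geq 3$ points. Writing $\Omega$ for the point set and $\Sigma$ for the support of $c$, the cyclic group $\langle c\rangle$ lies in the pointwise stabiliser $H_{(\Omega\setminus\Sigma)}$ and acts transitively on $\Sigma$, so $\Sigma$ is a \emph{Jordan set} for $H$ whose complement $\Omega\setminus\Sigma$ has size $f\geq 3$. The first step is to pass from primitivity to $2$-transitivity: by Jordan's theory of Jordan sets (a primitive group possessing a Jordan set $\Sigma$ with $2\leq|\Sigma|$ and $|\Omega\setminus\Sigma|\geq 2$ is $2$-transitive), $H$ must be $2$-transitive. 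This is the point at which the argument becomes genuinely structural. For the classical special cases one can continue by elementary means --- prime $m$ is Jordan's theorem and prime-power $m$ is Neumann's --- but for arbitrary composite $m$ one is forced to appeal to the Classification of Finite Simple Groups.

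Second I would feed this into the CFSG-based classification of finite $2$-transitive permutation groups, which by the O'Nan--Scott theorem splits into the affine and the almost simple types, and dispose of these in turn. For the affine groups $V\rtimes H_0$ of degree $n=p^d$ the number of fixed points of any element $(v,a)$ is either $0$ or $|\ker(a-1)|$, hence is $0$ or a power of $p$; thus $f\geq 3$ forces $f=p^{j}$ with $j\geq 1$, and one then checks against the known list of affine $2$-transitive groups that no such element is in fact a single cycle on the remaining $n-p^{j}$ points. The almost simple $2$-transitive groups form the familiar finite list: the alternating groups $A_n\leq H\leq S_n$ (which is the desired conclusion), the projective groups $\mathrm{PGL}_d(q)\leq H\leq \mathrm{P\Gamma L}_d(q)$ acting on $\mathrm{PG}_{d-1}(q)$, the degree-$(q+1)$ action of $\mathrm{PSL}_2(q)$, and finitely many sporadic actions (the Mathieu groups, $\mathrm{PSL}_2(11)$ on $11$ points, the Higman--Sims and Conway groups, and the Suzuki, unitary and Ree families over small fields). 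For each non-alternating entry I would compute the possible cycle types of its elements and verify that a single cycle can never fix as many as three points.

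The main obstacle is precisely this last elimination for the projective groups $\mathrm{P\Gamma L}_d(q)$, where genuine cycles with fixed points really do occur --- for instance an element fixing a subspace pointwise while acting with a single long orbit on the remaining points --- so one must count fixed points carefully, relating the cycle length to the order of a semisimple-times-unipotent element and to the sizes of the fixed subspaces, and confirm that whenever the moved points form a single cycle the count $f$ is at most $2$. The only near-misses are the degree-$(q+1)$ action of $\mathrm{PSL}_2(q)$, where a split-torus element is a $(q-1)$-cycle fixing the two points $0,\infty$ so that $f\leq 2$, and the handful of small Mathieu degrees, each checked individually. Once every family other than $A_n\leq H\leq S_n$ is excluded under the constraint $f\geq 3$, the proposition follows. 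I expect the bookkeeping for the projective and sporadic cases, rather than any single conceptual difficulty, to be where the real work lies.
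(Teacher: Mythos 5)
There is no internal proof to compare against here: the paper takes this lemma as a black box, citing it as \cite[Proposition~6.1]{Jones1} and noting only that the prime and prime-power cases are the classical theorems of Jordan and Neumann. Your outline is, in fact, the strategy by which the result is actually proved in the literature --- observe that the support of the cycle is a Jordan set with complement of size $f=n-m\geq 3$, invoke Jordan's theorem on Jordan sets to upgrade primitivity to $2$-transitivity, and then eliminate everything except $\mathrm{Alt}(n)\leq H\leq\mathrm{Sym}(n)$ from the CFSG list of finite $2$-transitive groups. So the plan is sound and there is no wrong turn in it.

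The gap is that it remains a plan. The entire mathematical content of the lemma sits in the eliminations you defer: for the affine groups you reduce to ``$f$ is a power of $p$'' and then say one ``checks against the known list,'' and for the almost simple groups you say you ``would compute the possible cycle types'' of each family. That computation is not routine for $\mathrm{PGL}_d(q)\leq H\leq\mathrm{P\Gamma L}_d(q)$ with $d\geq 3$: elements whose moved points form a single cycle on $\mathrm{PG}_{d-1}(q)$ (Singer-type elements acting on the complement of a fixed subspace, possibly twisted by a field automorphism) do exist, and bounding their fixed-point count by $2$ requires an argument relating the orders of such elements to $(q^k-1)/(q-1)$, not just an inspection. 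Until those cases are written out, what you have is a correct reduction to a finite list plus an unverified claim about every entry on it. For the purposes of this paper the right move is the one the author makes --- cite \cite{Jones1}, where exactly this case analysis is carried out --- rather than reprove the proposition; if you do want a self-contained argument, the projective and sporadic eliminations are where the work must actually appear.
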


We first deal with the case in which $n$ is large and even.

\begin{prop}\label{BnEvenLem}
If $n\geq12$ is even, then $W($\emph{B}$_n)$ is a strongly real Beauville group.
\end{prop}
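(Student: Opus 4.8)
The plan is to realise $W(\mathrm{B}_n)$ as the wreath product $C_2\wr S_n$ of signed permutations, written in the underline notation introduced above, and to exhibit two generating pairs $\{x_1,y_1\}$ and $\{x_2,y_2\}$ for which the three requirements---generation, condition $(\dagger)$, and strong reality---can be verified separately. To make strong reality essentially automatic, I would build each pair out of involutions: fix for each $i$ an involution $t_i\in W(\mathrm{B}_n)$ and two further involutions $a_i,b_i$, and set $x_i=t_ia_i$ and $y_i=t_ib_i$. A direct computation gives $t_ix_it_i=x_i^{-1}$ and $t_iy_it_i=y_i^{-1}$ simultaneously, so taking $\phi=\mathrm{id}$ and $g_i=t_i$ witnesses strong reality. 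This is the natural device in a Coxeter group, where reflections and short products of reflections supply the involutions $t_i,a_i,b_i$ we need.

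For generation I would pass to the quotient $\pi\colon W(\mathrm{B}_n)\to S_n$ that forgets the signs, whose kernel is the sign subgroup $N\cong C_2^{\,n}$. The strategy is to choose $x_i,y_i$ so that $\langle\pi(x_i),\pi(y_i)\rangle$ is primitive and contains a cycle of length $m$ with $2\le m\le n-3$; Lemma~\ref{JonesLem} then forces $\langle\pi(x_i),\pi(y_i)\rangle\ge\mathrm{Alt}(n)$, and a short further argument (using a suitable negative cycle among the generators to produce sign changes) shows that $\langle x_i,y_i\rangle\cap N$ is a $\langle\pi(x_i),\pi(y_i)\rangle$-invariant subgroup of $N$ that must be all of $N$, whence $\langle x_i,y_i\rangle=W(\mathrm{B}_n)$. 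The hypotheses $n\ge 12$ and $n$ even give ample room both for the cycle-length bound in Lemma~\ref{JonesLem} and for the choice of cycle lengths used below.

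The crux of the argument, and the step I expect to be the main obstacle, is condition $(\dagger)$. Conjugacy classes of $W(\mathrm{B}_n)$ are parametrised by signed cycle type, that is by a pair of partitions recording the lengths of the positive and negative cycles, and $\Sigma(x_i,y_i)$ is precisely the union of the conjugacy classes of all powers of $x_i$, $y_i$ and $x_iy_i$. Since conjugate elements have equal order, it suffices to arrange that no nontrivial common order (and, where orders do coincide, no common signed cycle type) arises between the two families; the clean sufficient condition is to select the generators so that $\mathrm{lcm}\bigl(o(x_1),o(y_1),o(x_1y_1)\bigr)$ is coprime to $\mathrm{lcm}\bigl(o(x_2),o(y_2),o(x_2y_2)\bigr)$, which immediately forces $\Sigma(x_1,y_1)\cap\Sigma(x_2,y_2)=\{e\}$. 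It is here that the parity of $n$ is genuinely used: one pair is built from negative cycles (contributing the prime $2$ and a controlled set of small primes through their lengths), while the other is built from positive cycles of complementary, odd, coprime lengths, and the even value of $n$ is exactly what lets both splittings of $\{1,\dots,n\}$ be carried out at once.

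Finally I would assemble the three verifications: the product-of-involutions construction yields strong reality, Lemma~\ref{JonesLem} yields generation, and the coprimality of the two order sets yields $(\dagger)$, so that $\{\{x_1,y_1\},\{x_2,y_2\}\}$ is a strongly real Beauville structure and hence $W(\mathrm{B}_n)$ is a strongly real Beauville group. The delicate points will be the explicit choice of signed cycle types realising the coprime orders while keeping both $S_n$-images primitive, and the bookkeeping showing that the generated subgroup captures the whole sign kernel $N$.
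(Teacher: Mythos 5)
Your outline of generation (pass to the $S_n$-quotient, apply Lemma~\ref{JonesLem}, then recover the sign kernel) and of strong reality (conjugating involutions $t_i$) matches the paper's approach in spirit, and those parts are workable. The fatal problem is your plan for condition $(\dagger)$. You propose to choose the two pairs so that $\mathrm{lcm}\bigl(o(x_1),o(y_1),o(x_1y_1)\bigr)$ and $\mathrm{lcm}\bigl(o(x_2),o(y_2),o(x_2y_2)\bigr)$ are coprime. This is impossible in $W(\mathrm{B}_n)$: its abelianisation is the Klein four-group (one $C_2$ from the sign of the underlying permutation, one from the product of the $\pm1$ entries), so for \emph{any} generating pair $\{x,y\}$ the images of $x$, $y$ and $xy$ must be the three distinct non-trivial elements of $C_2\times C_2$; in particular $o(x)$, $o(y)$ and $o(xy)$ are all even, and both lcms are always divisible by $2$. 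For the same reason your proposed asymmetry --- one pair built from negative cycles, the other from positive cycles of odd coprime lengths --- cannot be realised, since a pair of sign-free elements lies in the $S_n$-subgroup and does not generate.

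This is not a repairable detail but the actual crux: both $\Sigma(x_1,y_1)$ and $\Sigma(x_2,y_2)$ necessarily contain involutions (and typically elements of several other common even orders), so one must show that no non-trivial element, and in particular no involution, is conjugate into both sets. The paper does this by computing the traces of all powers of the six elements in the degree-$n$ monomial representation (conjugate matrices have equal traces) and, for the few coincidences of trace that survive when $n\ge 12$, by observing that in a monomial representation a diagonal matrix can only be conjugate to another diagonal matrix. Your fallback remark that one could compare signed cycle types ``where orders do coincide'' points in the right direction, but it is precisely the part you leave undeveloped, and without it the proof does not go through.
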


\begin{proof}
Consider the following elements.
\[
x_1:=(1,2,\ldots,n-1)(\underline{n})\mbox{, }y_1:=(n,n-1,\ldots,4,3)
\]
\[
t_1:=(1,2)(3,n-1)(4,n-2)\cdots(n/2,n/2+2)
\]
\[
x_2:=(1,2,\ldots,n-3)(\underline{n-2})(\underline{n-1})(\underline{n})\mbox{, }y_2:=(n,n-1,\ldots,6,5)
\]
\[
t_2:=(1,4)(2,3)(n,n-2)(5,n-3)(6,n-4)\cdots(n/2,n/2+2)
\]
We claim that $\{\{x_1,y_1\},\{x_2,y_2\}\}$ is a strongly real Beauville structure for $W($B$_n)$.

We first claim that $\langle x_1, y_1\rangle=W($B$_n)$. Viewing the subgroup $H:=\langle x_1^2, y_1\rangle$ as a degree $n$ permutation group it is easy to see that $H$ is 2-transitive and thus primitive. It is easily checked by direct calculation that $x_1^2y_1^2=(1,n-1,2,n,n-2)$ which is a 5-cycle and so by Lemma \ref{JonesLem} $H\geq Alt(n)$. Since $y_1$ is easily seen to be an odd permutation, we must have that $H=Sym(n)$. Since $x_1^{n-1}$ is a diagonal matrix with a sign change in one place, it follows that $\langle x_1, y_1\rangle=W($B$_n)$.

We next claim that $\langle x_2, y_2\rangle=W($B$_n)$. Viewing the subgroup $H:=\langle x_1^2, y_1\rangle$ as a degree $n$ permutation group we note that the subgroup $\langle x_2^2,(x_2^2)^{y_2},(x_2^2)^{y_2^2}\rangle$ fixes $n-2$ and acts transitively on the remaining points, so $H$ is 2-transitive and thus primitive. Since $x_2^2$ is an $(n-3)$-cycle it follows that $H\geq Alt(n)$ by Lemma \ref{JonesLem}. Since $y_2$ is easily seen to be an odd permutation, we must have that $H=Sym(n)$. Since $x_1^{n-1}$ is a diagonal matrix with a sign change in three places, it follows that $\langle x_2, y_2\rangle=W($B$_n)$.

Next we show that condition ($\dagger$) of  Definition \ref{MainDef} is satisfied. To do this we use the well known fact from linear algebra that any two conjugate matrices will have equal traces. For the elements given above these are easily obtained by direct calculation and are listed in Table \ref{EvenBnTrace}. By hypothesis $n\geq12$ and so these numbers are all distinct with the following exceptions. If $n=12$, then $Tr(y_2)=Tr(x_2y_2)$ but this does not obstruct condition ($\dagger$) holding. Furthermore $Tr((x_1y_1)^4)=Tr((x_2y_2)^r)$ when $r\not=8$ however in a monomial representation diagonal matrices can only be conjugate to other diagonal matrices and so $(x_1y_1)^4$ (which is diagonal) and $(x_2y_2)^r$ when $r\not=8$ (which is not diagonal) cannot be conjugate. If $n=18$, then we also have that $Tr(y_1)=Tr((x_2y_2)^8)$ but again only one these two elements is diagonal so they are again not conjugate. We have thus shown that $\{\{x_1,y_1\},\{x_2,y_2\}\}$ is a Beauville structure for $W($B$_n)$.

Finally we show that this Beauville structure is strongly real; this is simply a direct calculation verifying for $i=1,2$ that we have $x_i^{t_i}=x_i^{-1}$ and $y_i^{t_i}=y_i^{-1}$ and visibly conjugation by $t_1$ and $t_2$ differ only by an inner automorphism.
\end{proof}

\begin{table}
\begin{center}
\begin{tabular}{|c|c|}
\hline
$g$&$Tr(g^r)$\\
\hline\hline
$x_1$&$1$ ($r$ even) $-1$ ($r\not=n-1$ odd) $n-2$ ($r=n-1$)\\
$y_1$&$2$\\
$x_1y_1$&$n-4$ ($r\not=4$) $n-8$ ($r=4$)\\
\hline\hline
$x_2$&$3$ ($r$ even) $-3$ ($r\not=n-3$ odd) $n-6$ ($r=n-3$)\\
$y_2$&4\\
$x_2y_2$&$n-8$ ($r\not=8$) $n-16$ ($r=8$)\\

\hline
\end{tabular}
\end{center}
\caption{The traces of non-trivial powers of the elements appearing in the proof of Lemma \ref{BnEvenLem}.}
\label{EvenBnTrace}
\end{table}

Note that the hypothesis in the above lemma that $n\geq12$ cannot be weakened without using a different construction from the one considered in the above proof. Next, $n$ large and odd.

\begin{prop}\label{BnOddLem}
If $n\geq11$ is odd, then $W($\emph{B}$_n)$ is a strongly real Beauville group.
\end{prop}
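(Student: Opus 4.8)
The plan is to mirror the structure of the even case (Proposition \ref{BnEvenLem}) as closely as possible, exhibiting four explicit monomial elements $x_1,y_1,x_2,y_2$ together with conjugating involutions $t_1,t_2$, and then verifying the three requirements in turn: that each pair generates $W(\mathrm{B}_n)$, that condition ($\dagger$) holds, and that the structure is strongly real. For the generation step I would choose $x_1,y_1$ so that $x_1^2$ and $y_1$ generate a primitive subgroup $H$ of $\mathrm{Sym}(n)$ containing a short cycle (of length $m$ with $2\leq m\leq n-3$), invoke Lemma \ref{JonesLem} to conclude $H\geq \mathrm{Alt}(n)$, use an odd permutation among the generators to force $H=\mathrm{Sym}(n)$, and finally observe that an appropriate power of $x_1$ is diagonal with an odd number of sign changes, so that the negative diagonal matrix lies in the group and hence $\langle x_1,y_1\rangle = W(\mathrm{B}_n)$. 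The same recipe handles $x_2,y_2$. The one genuinely new wrinkle relative to the even case is parity: because $n$ is now odd, the cycle lengths and the positions of the sign changes have to be re-chosen so that (i) the relevant generating permutation is still odd, and (ii) the involutions $t_1,t_2$ inverting everything can be written down cleanly, which is why the even and odd cases are separated into two propositions.

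For condition ($\dagger$) I would again rely on the trace invariant: conjugate matrices have equal traces, so it suffices to compute $\mathrm{Tr}(g^r)$ for all nontrivial powers $g^r$ of $x_1,y_1,x_1y_1,x_2,y_2,x_2y_2$ and check that the trace spectra of the first triple and the second triple meet only in the value corresponding to the identity. Where traces happen to coincide, I would fall back on the same monomial refinement used in the even case, namely that in a monomial representation a diagonal matrix can only be conjugate to another diagonal matrix; this disposes of accidental trace collisions between a diagonal power and a non-diagonal power without any further work. I expect the bulk of the routine labour to be tabulating these traces (presumably in a companion table analogous to Table \ref{EvenBnTrace}) and confirming that the finitely many coincidences all fall under the diagonal-versus-nondiagonal escape clause for $n\geq 11$ odd.

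The strong reality is then the easiest part: having built $t_1,t_2$ into the construction, I would simply verify by direct calculation that $x_i^{t_i}=x_i^{-1}$ and $y_i^{t_i}=y_i^{-1}$ for $i=1,2$, and note that conjugation by $t_1$ and by $t_2$ differ only by an inner automorphism, so a single outer automorphism (indeed the identity automorphism together with the conjugators $g_i=t_i$) witnesses the strongly real condition. The main obstacle, as in the even case, is finding the right explicit elements: one must simultaneously arrange primitivity, a short-or-long cycle of admissible length for Lemma \ref{JonesLem}, the correct parity of the sign changes so that the full group $W(\mathrm{B}_n)$ (not merely $\mathrm{Sym}(n)$ or an index-two subgroup) is generated, disjoint trace spectra for ($\dagger$), and inverting involutions $t_i$ — all at once, and in a way that respects the odd parity of $n$. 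Once a candidate quadruple survives these competing constraints, everything else reduces to bookkeeping.
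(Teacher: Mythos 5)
Your plan is precisely the strategy the paper follows in the odd case: explicit monomial elements, Lemma \ref{JonesLem} applied to a primitive subgroup containing a cycle of admissible length for generation, traces to verify condition ($\dagger$) with the diagonal-versus-non-diagonal refinement to dispose of accidental collisions, and a direct check that the $t_i$ invert the generators. But as written the proposal is a recipe rather than a proof: the proposition is an existence statement, and its entire mathematical content is the exhibition of a quadruple surviving all the competing constraints you list. You correctly identify that finding the right explicit elements is the main obstacle and then stop exactly there, so the one thing that has to be supplied is missing. For comparison, the paper takes $x_1=(1,2,\ldots,n-2)(\underline{n-1})$, $y_1=(n,n-1,\ldots,2)$ with $t_1=(2,n-2)(3,n-3)\cdots((n-1)/2,(n+1)/2)(n-1,n)$, and $x_2=(1,2,\ldots,n-4)(\underline{n-3})(\underline{n-2})(\underline{n-1})$, $y_2=(n,n-1,\ldots,4)$ with $t_2=(1,3)(4,n-4)\cdots((n-1)/2,(n+1)/2)$; the trace coincidences that actually occur (at $n=11$, $17$ and $19$) are exactly of the kind your diagonal escape clause handles, so your verification scheme would go through once these witnesses are in hand.

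One small correction to your generation step: from a diagonal power of $x_1$ with an odd number of sign changes you should not argue via ``the negative diagonal matrix lies in the group'' --- knowing that $-I$ belongs to $\langle x_1,y_1\rangle$ does not by itself yield the whole normal subgroup of sign changes. The deduction the paper makes (and the one you want) is that a diagonal matrix with exactly one, or exactly three, sign changes, conjugated by the copy of $Sym(n)$ already obtained, generates all of the normal subgroup $C_2^n$, whence $\langle x_i,y_i\rangle=W(\mathrm{B}_n)$. The distinction matters because for $n$ odd the all-ones sign vector also has odd weight yet its $Sym(n)$-orbit spans only the centre.
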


\begin{proof}
Consider the following elements.
\[
x_1:=(1,2,\ldots,n-2)(\underline{n-1})\mbox{, }y_1:=(n,n-1,\ldots,2)
\]
\[
t_1:=(2,n-2)(3,n-3)\cdots((n-1)/2,(n+1)/2)(n-1,n)
\]
\[
x_2:=(1,2,\ldots,n-4)(\underline{n-3})(\underline{n-2})(\underline{n-1})\mbox{, }y_1:=(n,n-1,\ldots,4)
\]
\[
t_2:=(1,3)(4,n-4)\cdots((n-1)/2,(n+1)/2)
\]
We claim that $\{\{x_1,y_1\},\{x_2,y_2\}\}$ is a strongly real Beauville structure.

We first claim that $\langle x_1, y_1\rangle=W($B$_n)$. Viewing the subgroup $H:=\langle x_1^2, y_1\rangle$ as a degree $n$ permutation group, the subgroup $\langle x_1^2, (x_1)^{y_1}\rangle$ fixes $n-1$ and acts transitively on the remaining points, so $H$ is 2-transitive and thus primitive. It is easily checked by direct calculation that $x_1^2y_1^2=(1,n,n-2,n-1,n-3)$ which is a 5-cycle and so by Lemma \ref{JonesLem} $H\geq Alt(n)$. Since $y_1$ is easily seen to be an odd permutation, we must have that $H=Sym(n)$. Since $x_1^{n-2}$ is a diagonal matrix with a sign change in one place, it follows that $\langle x_1, y_1\rangle=W($B$_n)$.

We next claim that $\langle x_2, y_2\rangle=W($B$_n)$. Viewing the subgroup $H:=\langle x_1^2, y_1\rangle$ as a degree $n$ permutation group we note that the subgroup $\langle x_2^2,(x_2^2)^{y_2},(x_2^2)^{y_2^2},(x_2^2)^{y_2^3}\rangle$ fixes $n-3$ and acts transitively on the remaining points, so $H$ is 2-transitive and thus primitive. Since $y_2$ is an $(n-3)$-cycle it follows that $H\geq Alt(n)$ by Lemma \ref{JonesLem}. Since $y_2$ is easily seen to be an odd permutation, we must have that $H=Sym(n)$. Since $x_1^{n-1}$ is a diagonal matrix with a sign change in three places, it follows that $\langle x_2, y_2\rangle=W($B$_n)$.

Next we show that condition ($\dagger$) of Definition \ref{MainDef} is satisfied. To do this we again use the well known fact from linear algebra that any two conjugate matrices will have equal traces. For the elements given above these are easily obtained by direct calculation and are listed in Table \ref{OddBnTrace}. By hypothesis $n\geq11$ and so these numbers are all distinct with the following exceptions. If $n=11$, then  $Tr((x_2y_2)^l)=Tr(y_2^m)$ for $l\not=8$ and $m\not=o(y_2)$ and if $n=19$, then $Tr((x_2y_2)^8)=Tr(y_2^r)$ but neither of these calculations obstruct condition ($\dagger$) holding.

Furthermore, if $n=11$, then $Tr((x_1y_1)^4)=Tr(y_2^r)$ for any $r\not=o(y_2)$ and if $n=17$, then $Tr((x_2y_2)^8)=Tr(y_1^r)$ for any $r\not=o(y_1)$ however in a monomial representation diagonal matrices can only be conjugate to other diagonal matrices and so neither $(x_1y_1)^4$ nor $(x_2y_2)^8$ (which are diagonal) can be conjugate to $y_1^l$ for $l\not=o(y_1)$ or $y_2^m$ for $m\not=o(y_2)$ (which are not diagonal). We have thus shown that $\{\{x_1,y_1\},\{x_2,y_2\}\}$ is a Beauville structure for $W($B$_n)$.

Finally we show that this Beauville structure is strongly real; this is simply a direct calculation verifying that for $i=1,2$ we have that $x_i^{t_i}=x_i^{-1}$ and $y_i^{t_i}=y_i^{-1}$ and visibly conjugation by $t_1$ and $t_2$ differ only by an inner automorphism.
\end{proof}

\begin{table}
\begin{center}
\begin{tabular}{|c|c|}
\hline
$g$&$Tr(g^r)$\\
\hline\hline
$x_1$&$2$ ($r$ even) $0$ ($r\not=n-2$ odd) $n-2$ ($r=n-2$)\\
$y_1$&$1$\\
$x_1y_1$&$n-4$ ($r\not=4$) $n-8$ ($r=4$)\\

\hline\hline

$x_2$&$4$ ($r$ even) $-2$ ($r\not=n-4$ odd) $n-6$ ($r=n-4$)\\
$y_2$&3\\
$x_2y_2$&$n-8$ ($r\not=8$) $n-16$ ($r=8$)\\

\hline
\end{tabular}
\end{center}
\caption{The traces of non-trivial powers of the elements appearing in the proof of Lemma \ref{BnOddLem}.}
\label{OddBnTrace}
\end{table}

Whilst some would simply leave the small cases to the reader we prefer to be more explicit, given the history of the Alt(6) controversy: \cite{FG} states that ``On the other hand computer inspection using the GAP programme has revealed that no strongly real structure exists on [Alt(6)]" whilst \cite{FJ} claimed the opposite, again without being explicit. (It turns out that Alt(6) is indeed a strongly real Beauville group.)

\begin{lemma}\label{BnSmallLem}
The Coxeter groups of type \emph{B}$_n$ for $5\leq n\leq10$ are strongly real Beauville groups.
\end{lemma}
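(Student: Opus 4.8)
The plan is to treat the six values $n=5,6,7,8,9,10$ individually by exhibiting, for each, an explicit strongly real Beauville structure $\{\{x_1,y_1\},\{x_2,y_2\}\}$ written in the signed-permutation notation of this section, together with the conjugating involutions $t_1,t_2$ used to establish strong reality. Since the remark following Proposition \ref{BnEvenLem} notes that the large-$n$ constructions cannot be pushed below the stated bounds, I do not expect a single uniform family to cover all six cases; instead I would search for generators imitating the shape of the elements used in Propositions \ref{BnEvenLem} and \ref{BnOddLem} --- one system built from an $(n-1)$- or $(n-2)$-cycle with a single sign change, the other from a shorter cycle with three sign changes --- adjusting the supports so that the resulting order and trace data separate the two systems as much as possible.

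For the generation claim I would reuse the toolkit already developed: viewing $\langle x_i^2,y_i\rangle$ as a degree-$n$ permutation group, I would check $2$-transitivity (hence primitivity) directly, produce a cycle of length $m$ with $2\le m\le n-3$ and invoke Lemma \ref{JonesLem} to force $\mathrm{Alt}(n)$, use an odd permutation among the generators to upgrade to $\mathrm{Sym}(n)$, and finally note that an appropriate power of $x_i$ is a diagonal matrix with sign changes to conclude $\langle x_i,y_i\rangle=W(\mathrm{B}_n)$. The one point needing care is $n=5$, where Lemma \ref{JonesLem} only applies to a cycle of length $m=2$; here a primitive group containing a transposition is already $\mathrm{Sym}(5)$, so the argument still closes, but for the very smallest case it may be cleaner simply to verify generation by direct calculation.

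Verifying condition ($\dagger$) is the step I expect to be the main obstacle. As in the large-$n$ proofs I would first record the traces of all nontrivial powers of $x_i$, $y_i$ and $x_iy_i$ in a table and argue that two elements with different traces cannot be conjugate. For these small groups, however, the trace (and even the order) invariants will collide far more often than they do for $n\ge 11$, so the clean separation available in Propositions \ref{BnEvenLem} and \ref{BnOddLem} is no longer guaranteed. I would handle the surviving coincidences by the same device used there --- in a monomial representation a diagonal matrix can only be conjugate to another diagonal matrix, which rules out many apparent clashes --- and, where even this is not decisive, by a direct inspection of the relevant conjugacy classes. Because the number of residual cases is finite but awkward, the honest way to certify ($\dagger$) for all six values of $n$ is an explicit computation in \textsf{GAP}, which is in keeping with the explicit treatment motivated by the Alt(6) discussion preceding the statement.

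Finally, strong reality is established by the same direct check as before: for each $i$ I would verify $x_i^{t_i}=x_i^{-1}$ and $y_i^{t_i}=y_i^{-1}$ and observe that conjugation by $t_1$ and $t_2$ differ by an inner automorphism, so that a single $\phi\in\mathrm{Aut}(W(\mathrm{B}_n))$ together with suitable $g_1,g_2$ realises the definition. Given the cautionary Alt(6) precedent I would not take the existence of such $t_i$ for granted but confirm it explicitly for each $n$; this is where the computer verification also pays off, since it simultaneously certifies generation, the intersection condition ($\dagger$), and the strong-reality relations.
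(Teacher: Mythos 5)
Your proposal matches the paper's approach: the paper simply exhibits explicit elements $x_i,y_i,t_i$ for each $5\leq n\leq 10$ in a table and states that a straightforward (computer) verification confirms generation, condition ($\dagger$) and strong reality, which is exactly the case-by-case explicit-structure-plus-computation strategy you describe. Your additional remarks on reusing Lemma \ref{JonesLem} and trace arguments are reasonable but unnecessary once the verification is delegated to a direct computation, as the paper does.
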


\begin{proof}
A straightforward computation verifies that the elements of these groups given in Table \ref{BnSmall} provide strongly real Beauville structures.
\end{proof}

\begin{table}
\begin{center}
\begin{tabular}{|c|l|l|}
\hline

$n$&$x_1$&$x_2$\\
&$y_1$&$y_2$\\
&$t_1$&$t_2$\\
\hline\hline
5&$(\underline{1},\underline{2},\underline{5})$&$(2,3,4,5)$\\
&(1,2,3)(4,5)&(1,4,\underline{2},5,3)\\
&(1,2)&(2,5)(3,4)\\

6&(1,2,3,4,5)(\underline{6})&(1,2,3,4)(\underline{5},6)\\
&(6,5,4,3,2,1)&(6,5,4,3)\\
&(1,5)(2,4)&(1,2)(3,4)(5,6)\\

7&$(\underline{1})(2,6)(4,7)$&$(1,2,3,4,5)(\underline{6})$\\
&(1,2,3,4)(5,6,7)&(1,7)(6,5,4,3,2)\\
&(2,4)(6,7)&(2,5)(3,4)\\

8&$(1,2,3,4,5,6,7)(\underline{8})$&$(1,2,3,4,5)(\underline{6})(\underline{7})(\underline{8})$\\
&(8,7,6,5,4,3)&(1,3)(8,7,6,5,4)\\
&(1,2)(3,7)(4,6)&(1,3)(4,5)(6,8)\\

9&$(1,2,3,4,5,6,7)(\underline{8})$&$(1,2,3,4,5,6)(\underline{7})(\underline{8})(\underline{9})$\\
&(9,7,6,5,8,3,2,1)&(2,7)(6,8)(1,9)\\
&(1,7)(2,6)(3,5)&(2,6)(5,3)(7,8)\\

10&(1,2,3,4,5,6,7,8,9)(\underline{10})&(1,2,3,4,5,6,7)(\underline{8})(\underline{9})(\underline{10})\\
&(10,9,8,7,6,5,4,3,2,1)&(10,9,8,7,6,5,4,3)\\
&(1,9)(2,8)(3,7)(4,6)&(1,2)(3,7)(4,6)(8,10)\\

\hline
\end{tabular}
\end{center}
\caption{Elements of the groups B$_n$ for small $n$ that provide strongly real Beauville structures proving Lemma \ref{BnSmallLem}.}
\label{BnSmall}
\end{table}

Straightforward computation verifies that $W($B$_n)$ is not a Beauville group for $n\leq4$.

\section{Coxeter groups of type D$_n$}\label{DnSec}

In this section we prove Theorem \ref{MainThm} for the groups of type D$_n$. Since many of the proofs appearing here are analogous to those used in the previous section we merely sketch the details here.

In terms of the representation described at the beginning of Section \ref{BnSec}, the Coxeter group of type D$_n$ is the subgroup of the Coxeter group of type B$_n$ in which every element is represented by a matrix that contains an even number of $-1$s. It follows that we can use the same representation to describe elements of these groups.

\begin{prop}\label{DnEvenLem}
If $n\geq10$ is even, then $W($\emph{D}$_n)$ is a strongly real Beauville group.
\end{prop}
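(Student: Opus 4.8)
The plan is to follow the template of the proof of Proposition~\ref{BnEvenLem}, modifying the explicit elements so that they lie in $W(\mathrm{D}_n)$; that is, so that each signed permutation matrix carries an \emph{even} number of $-1$ entries. Concretely, I would begin from the type $\mathrm{B}_n$ generators and perturb the sign pattern of each $x_i$ --- the only elements of the $\mathrm{B}_n$ construction with an odd number of underlines --- by introducing one further sign change, chosen to preserve the underlying permutation structure on which the generation argument rests while making the total number of $-1$s even. The elements $y_1$ and $y_2$, which already have no sign changes, may be retained unchanged.

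For generation I would argue in two stages exactly as before. First, passing to the image of $\langle x_i,y_i\rangle$ in $Sym(n)$, I would exhibit a small subgroup --- generated by a power of $x_i$ together with a few of its $y_i$-conjugates --- that fixes one point and is transitive on the remaining $n-1$, so that the image is $2$-transitive and hence primitive; an explicit short cycle arising as a power or product then forces the image to contain $Alt(n)$ by Lemma~\ref{JonesLem}, and an odd permutation among the generators upgrades this to all of $Sym(n)$ (which is needed, since $Sym(n)$ is the quotient of $W(\mathrm{D}_n)$ by its diagonal part). Second --- and here is the one genuinely new point relative to the $\mathrm{B}_n$ case --- I must recover the diagonal part. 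I can no longer use a power that is a single sign change, since such an element does not lie in $W(\mathrm{D}_n)$; instead I would arrange the signs so that a suitable power of $x_i$ is a diagonal matrix with a nonzero even number of sign changes. Conjugating such an element by $Sym(n)$ yields diagonal matrices of weight two $(\underline{a})(\underline{b})$, and since the vectors $e_a+e_b$ span the even-weight subspace of $(\mathbb{Z}/2)^n$, these generate the full even-weight diagonal subgroup, giving $\langle x_i,y_i\rangle=W(\mathrm{D}_n)$ for $i=1,2$.

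To verify condition~$(\dagger)$ of Definition~\ref{MainDef} I would again invoke the fact that conjugate monomial matrices have equal traces, tabulate the traces of the powers of $x_i$, $y_i$ and $x_iy_i$ in a table analogous to Table~\ref{EvenBnTrace}, and check that for $n\geq10$ the two trace multisets meet only in the value $n$ (the trace of the identity); the finitely many sporadic coincidences are then disposed of by the observation that a diagonal matrix can only be conjugate to a diagonal matrix. One point special to $\mathrm{D}_n$ with $n$ even is the central involution $-I\in Z(W(\mathrm{D}_n))$: since $-I\neq e$, I must ensure it is not simultaneously a power of one of $x_1,y_1,x_1y_1$ and of one of $x_2,y_2,x_2y_2$ --- equivalently that the trace $-n$ does not produce a forbidden overlap --- which is arranged by keeping the relevant element orders away from the values that would yield $-I$.

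Finally, for strong reality I would exhibit involutions $t_1,t_2$ with $x_i^{t_i}=x_i^{-1}$ and $y_i^{t_i}=y_i^{-1}$, chosen to have the same number of sign changes modulo $2$. Then $g_2:=t_2t_1^{-1}$ has even weight and so lies in $W(\mathrm{D}_n)$, while conjugation by $t_1$ is an automorphism $\phi$ of $W(\mathrm{D}_n)$ (outer precisely when the $t_i$ have odd weight); taking $g_1:=e$ one checks that $g_i\phi(x_i)g_i^{-1}=x_i^{-1}$ and $g_i\phi(y_i)g_i^{-1}=y_i^{-1}$ for $i=1,2$, as required. The main obstacle I anticipate is the second generation step together with the bookkeeping for $-I$: making the even-sign perturbation of the $\mathrm{B}_n$ elements still yield a power with an even, nonzero, and $(<n)$ number of sign changes, while checking that this perturbation introduces no new trace coincidence between the two systems, is exactly where the $\mathrm{D}_n$ argument departs from the template of Proposition~\ref{BnEvenLem}.
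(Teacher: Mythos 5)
Your proposal is correct and follows essentially the same route as the paper, which simply exhibits sign-perturbed analogues of the type $\mathrm{B}_n$ elements (with an even number of underlines) and declares the argument ``analogous'' to that of Proposition~\ref{BnEvenLem}, relying on the same trace table and the same $t_i$-conjugation for strong reality. The two points you single out as new --- generating the even-weight diagonal subgroup from a power of $x_i$ with an even, nonzero number of sign changes, and ruling out $-I$ as a common power (its trace $-n$ never appears in the tables) --- are exactly the places where the paper's terse ``analogous'' needs fleshing out, and your treatment of them is sound.
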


\begin{proof}
\[
x_1:=(1,2,\ldots,n-3)(\underline{n-2})(\underline{n})\mbox{, }
y_1:=(n,n-1,\ldots,1)
\]
\[
t_1:=(1,n-3)(2,n-4)\cdots(n/2-2,n/2)(n-2,n)
\]
\[
x_2:=(1,2,\ldots,n-5)(\underline{n-4})(\underline{n-3})(\underline{n-1})(\underline{n})\mbox{, }
y_2:=(n,n-1,\ldots,3)
\]
\[
t2:=(1,2)(3,n-5)(4,n-6)\cdots(n/2,n/2-2)(n-4,n)(n-3,n-1)
\]
A proof analogous to that of Lemma \ref{BnEvenLem} may be used to show that $\{\{x_1,y_1\},\{x_2,y_2\}\}$ is a strongly real Beauville structure. The traces of powers of these elements are explicitly listed in Table \ref{EvenDnTrace}.
\end{proof}
\begin{table}
\begin{center}
\begin{tabular}{|c|c|}
\hline
$g$&$Tr(g^r)$\\
\hline\hline
$x_1$&$3$ ($r$ even) $-1$ ($r\not=n-3$ odd) $n-4$ ($r=n-3$)\\
$y_1$&$0$\\
$x_1y_1$&$n-4$\\

\hline\hline

$x_2$&$5$ ($r$ even) $-3$ ($r\not=n-5$ odd) $n-8$ ($r=n-5$)\\
$y_2$&2\\
$x_2y_2$&$n-8$\\

\hline
\end{tabular}
\end{center}
\caption{The traces of non-trivial powers of the elements appearing in the proof of Lemma \ref{DnEvenLem}.}
\label{EvenDnTrace}
\end{table}

\begin{prop}\label{DnOddLem}
If $n\geq11$ is odd, then $W($\emph{D}$_n)$ is a strongly real Beauville group.
\end{prop}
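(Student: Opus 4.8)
The plan is to reproduce verbatim the template of the odd-$n$ case of type B$_n$ (Proposition \ref{BnOddLem}) and the even-$n$ case of type D$_n$ (Proposition \ref{DnEvenLem}), the one genuinely new constraint being that every signed permutation we write down must carry an \emph{even} number of sign changes so as to land inside $W($D$_n)\le W($B$_n)$. To fix ideas I would look for two generating pairs, together with inverting involutions $t_1,t_2$, of the shape
\[
x_1:=(1,2,\ldots,n-3)(\underline{n-2})(\underline{n-1}),\quad y_1:=(n,n-1,\ldots,2),
\]
\[
x_2:=(1,2,\ldots,n-5)(\underline{n-4})(\underline{n-3})(\underline{n-2})(\underline{n-1}),\quad y_2:=(n,n-1,\ldots,4),
\]
with $t_1,t_2$ taken as products of transpositions reversing the relevant cycles; the exact cycle lengths and sign positions would then be fine-tuned so that the two trace profiles separate. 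Each generator here carries an even number of underlines and so lies in $W($D$_n)$, while $y_1$ is an $(n-1)$-cycle and $y_2$ an $(n-3)$-cycle, both odd permutations since $n$ is odd.

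For each pair I would establish generation in three steps exactly as before. First, projecting to the symmetric group by forgetting signs, I would check that $H:=\langle x_i^2,y_i\rangle$ is $2$-transitive, hence primitive, by exhibiting a point-stabilising subgroup (a product of conjugates of $x_i^2$ by powers of $y_i$) that is transitive on the remaining points. Second, I would locate a cycle of length $m$ with $2\le m\le n-3$, for instance a short product such as $x_i^2y_i^2$ or directly the $(n-3)$-cycle $y_2$, and invoke Lemma \ref{JonesLem} to force $H\ge\mathrm{Alt}(n)$, upgrading to $H=\mathrm{Sym}(n)$ using that $y_i$ is odd. Third, since a suitable power of $x_i$ is a nontrivial diagonal matrix with two sign changes, its $\mathrm{Sym}(n)$-conjugates generate the full even-weight diagonal subgroup, so the pair generates all of $W($D$_n)$.

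Condition ($\dagger$) of Definition \ref{MainDef} I would verify through the trace invariant: conjugate monomial matrices have equal traces, so it suffices to tabulate $\mathrm{Tr}(g^r)$ for $g\in\{x_i,y_i,x_iy_i\}$ and argue the two lists meet only at the identity. The main obstacle, as in every B$_n$ and D$_n$ computation so far, is precisely this trace bookkeeping: for large $n$ the values separate cleanly, but one must isolate the finitely many small-$n$ coincidences and dispose of each using the observation that in a monomial representation a diagonal matrix is conjugate only to another diagonal matrix, so a diagonal power of one word cannot meet a non-diagonal power of the other even when their traces agree. Strong reality is then the easiest part: a direct check that $x_i^{t_i}=x_i^{-1}$ and $y_i^{t_i}=y_i^{-1}$, with $t_1$ and $t_2$ differing only by an inner automorphism, completes the verification.
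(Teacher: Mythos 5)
Your proposal follows essentially the same route as the paper: the paper's own proof simply exhibits explicit signed permutations with an even number of underlines (its $x_1$ is $(1,2,\ldots,n-2)(\underline{n-1})(\underline{n})$ with $y_1=(n,n-1,\ldots,2)$, and $x_2$ carries four sign changes), tabulates the traces of their powers, and declares the argument analogous to the odd B$_n$ case --- i.e.\ Jones's lemma for generation, trace separation plus the diagonal-versus-non-diagonal observation for condition ($\dagger$), and explicit inverting involutions for strong reality. Your elements differ from the paper's only by a shift of the cycle supports, and your stated verification plan matches the paper's step for step.
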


\begin{proof}
Consider the following elements.
\[
x_1:=(1,2,\ldots,n-2)(\underline{n-1})(\underline{n})\mbox{, }
y_1:=(n,n-1,\ldots,4,3,2)
\]
\[
t_1:=(2,n-2)(3,n-3)\cdots((n-1)/2,(n+1)/2)(n-1,n)
\]
\[
x_2:=(1,2,\ldots,n-4)(\underline{n-3})(\underline{n-2})(\underline{n-1})(\underline{n})\mbox{, }
y_2:=(n,n-1,\ldots,4)
\]
\[
t_2:=(1,3)(4,n-4)(5,n-5)\cdots((n-1)/2,(n+1)/2)(n-4,n)(n-3,n-2)
\]
A proof analogous to that of Lemma \ref{BnOddLem} may be used to show that $\{\{x_1,y_1\},\{x_2,y_2\}\}$ is a strongly real Beauville structure. The traces of powers of these elements are explicitly listed in Table \ref{OddDnTrace}.
\end{proof}
\begin{table}
\begin{center}
\begin{tabular}{|c|c|}
\hline
$g$&$Tr(g^r)$\\
\hline\hline
$x_1$&$2$ ($r$ even) $-2$ ($r\not=n-2$ odd) $n-4$ ($r=n-2$)\\
$y_1$&$1$\\
$x_1y_1$&$n-4$\\

\hline\hline

$x_2$&$4$ ($r$ even) $-4$ ($r\not=n-4$ odd) $n-8$ ($r=n-4$)\\
$y_2$&3\\
$x_2y_2$&$n-8$\\

\hline
\end{tabular}
\end{center}
\caption{The traces of non-trivial powers of the elements appearing in the proof of Lemma \ref{DnOddLem}.}
\label{OddDnTrace}
\end{table}

\begin{lemma}\label{DnSmallLem}
The Coxeter groups of type D$_n$ for $5\leq n\leq9$ are strongly real Beauville groups.
\end{lemma}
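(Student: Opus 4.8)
The plan is to follow exactly the pattern established in the proofs of Lemmas \ref{BnSmallLem}, since the statement for D$_n$ with $5\leq n\leq 9$ is the small-case analogue of the asymptotic Propositions \ref{DnEvenLem} and \ref{DnOddLem}, and lies outside the ranges $n\geq 10$ (even) and $n\geq 11$ (odd) that those propositions cover. The approach is entirely computational: for each of the five values $n=5,6,7,8,9$ I would exhibit an explicit strongly real Beauville structure $\{\{x_1,y_1\},\{x_2,y_2\}\}$ for $W(\text{D}_n)$, presenting the generators $x_i,y_i$ together with the conjugating involutions $t_i$ in a table analogous to Table \ref{BnSmall}. Each such element must be written in the underlined-cycle monomial notation of Section \ref{BnSec}, and crucially must lie in $W(\text{D}_n)$ rather than merely in $W(\text{B}_n)$, which means every element is represented by a matrix carrying an \emph{even} number of sign changes (an even number of underlined points).

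For each candidate structure there are three things to verify, mirroring the structure of the earlier proofs. First, generation: I would check that $\langle x_i,y_i\rangle = W(\text{D}_n)$ for $i=1,2$, typically by showing that $\langle x_i^2, y_i\rangle$ projects onto $\text{Sym}(n)$ (via primitivity plus a short cycle and Lemma \ref{JonesLem}, or simply by direct verification that the projection is all of $\text{Sym}(n)$) and that the sign-change part is not trivial, so that the full group is recovered. Second, the Beauville condition ($\dagger$): I would verify $\Sigma(x_1,y_1)\cap\Sigma(x_2,y_2)=\{e\}$, which for these small groups is fastest checked directly by computing the element orders and conjugacy-class data of all relevant powers, using the trace invariant (two conjugate monomial matrices have equal trace, and a diagonal matrix can only be conjugate to a diagonal one) exactly as in the proofs above. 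Third, strong reality: I would confirm by direct calculation that $x_i^{t_i}=x_i^{-1}$ and $y_i^{t_i}=y_i^{-1}$ for $i=1,2$, with $t_i\in W(\text{D}_n)$, so that conjugation by $t_1$ and $t_2$ differs only by an inner automorphism.

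For such small groups all three verifications are finite and mechanical, so I would delegate them to a computer algebra computation (GAP), exactly as the author does in Lemma \ref{BnSmallLem}. The genuine content of the proof is therefore the \emph{discovery} of suitable generating pairs rather than their verification. The main obstacle I anticipate is purely the search: for the smallest cases, especially $n=5$ and $n=6$, the groups are small enough that the density of valid Beauville structures is low, and one must simultaneously satisfy generation of the full Coxeter group, disjointness of the two $\Sigma$-sets (which forces the two pairs to have, roughly, incompatible element orders and cycle types), and the strong-reality requirement that both generators of each pair be simultaneously inverted by a single involution lying inside $W(\text{D}_n)$. Finding generators meeting all three constraints at once for each $n$ is the delicate step; once found, recording them in a table and asserting that a straightforward computation confirms the claim completes the proof.
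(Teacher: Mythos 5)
Your proposal matches the paper's approach exactly: the paper's proof consists solely of exhibiting explicit generating pairs $x_i,y_i$ and inverting involutions $t_i$ for each $5\leq n\leq 9$ in a table (Table \ref{DnSmall}), written in the underlined-cycle notation with an even number of sign changes, and asserting that a straightforward computation verifies the three conditions you list. The only thing missing from your write-up is the actual data, which, as you correctly observe, is the genuine content of the proof.
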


\begin{proof}
A straightforward computation verifies that the elements of these groups given in Table \ref{DnSmall} provide strongly real Beauville structures.
\end{proof}

\begin{table}
\begin{center}
\begin{tabular}{|c|l|l|}
\hline

$n$&$x_1$&$x_2$\\
&$y_1$&$y_2$\\
&$t_1$&$t_2$\\
\hline\hline
5&(1,2,3,4,5)&(2,4,3)\\
&(1,2)(\underline{4})(\underline{5})&(5,2,3,4)\\
&(1,2)(4,5)&(2,4)\\

6&(1,2,3,4,5)&(\underline{1},\underline{2},\underline{3},\underline{4})(5,6)\\
&(\underline{1})(\underline{4})(5,6)&(6,5,4,3)\\
&(1,4)(2,3)&(1,2)(3,4)(5,6)\\

7&(1,2,3,4,5,6,7)&(1,2,3,4,5)\\
&(1,2)(\underline{3})(\underline{4})&(\underline{7},\underline{6},\underline{5},\underline{4})(3,2,1)\\
&(1,2)(3,7)(4,6)&(1,3)(4,5)(6,7)\\

8&(1,2,3,4,5)(\underline{6})(\underline{7})&(1,2,3,4)(\underline{5},\underline{6},\underline{7},\underline{8})\\
&(1,2,3,4,5,6,7,8)&(8,7,6,5,4,3)\\
&(1,5)(2,4)(6,8)&(1,2)(3,4)(5,8)(6,7)\\

9&(1,2,3,4,5,6)(\underline{7})(\underline{9})&(1,2,3,4,5)(\underline{6},\underline{7},\underline{8},\underline{9})\\
&(9,8,7,6,5,4)(1,3)&(9,8,7,6,5)(4,3,2,1)\\
&(1,3)(4,6)(7,9)&(1,4)(2,3)(6,9)(7,8)\\
\hline
\end{tabular}
\end{center}
\caption{Elements of the groups D$_n$ for small $n$ that provide strongly real Beauville structures proving Lemma \ref{DnSmallLem}.}
\label{DnSmall}
\end{table}

Straightforward computation verifies that $W($D$_n)$ is not a Beauville group for $n\leq4$.

\section{The exceptional groups}\label{ExSec}

In this section we prove Theorem \ref{MainThm} in the case of the exceptional Coxeter groups, that is,
the groups of types E$_6$, E$_7$, E$_8$, F$_4$, H$_3$ and
H$_4$.

\subsection{The Coxeter groups of type E$_6$, E$_7$ and E$_8$}

In each of these three cases we exhibit explicit matrices for strongly real Beauville structures for these groups. The matrices we will use in this section come from representations of these groups first described by the author in \cite[Section 6]{Edin}. These have the helpful property that the parabolic subgroup  isomorphic to a symmetric group obtained by removing a single node is represented by permutation matrices. (Many of the elements lying outside this `monomial subgroup' also admit an unusually simple description.)

\begin{lemma}
The groups $W(\emph{E}_n)$ for $n=6,7,8$ are strongly real Beauville groups.
\end{lemma}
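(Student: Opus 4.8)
The plan is to mirror the strategy used throughout Sections~\ref{BnSec} and \ref{DnSec}: for each of $W(\text{E}_6)$, $W(\text{E}_7)$ and $W(\text{E}_8)$ we exhibit two explicit pairs of generators $\{x_1,y_1\}$ and $\{x_2,y_2\}$, each pair generating the whole group, and then verify the disjointness condition $(\dagger)$ together with the strongly real condition. The crucial difference from the classical types is that here we no longer have a convenient monomial/permutation description in which trace bookkeeping is transparent; instead we work in the explicit matrix representations of \cite[Section~6]{Edin}, in which the symmetric-group parabolic obtained by deleting a node acts by permutation matrices. First I would write down candidate elements, choosing (where possible) one generator inside this permutation parabolic and the other outside it, so that generation of the full group can be checked by a short argument rather than by brute force.

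For the generation step I would argue that $\langle x_i,y_i\rangle$ is not contained in any maximal (in particular any parabolic) subgroup: the permutation parabolic $\cong \mathrm{Sym}(n+1)$ is large and well understood, so I would pick the pairs so that the image in the quotient by the relevant normal structure is visibly all of the symmetric or alternating group, then use an element with a nontrivial sign/determinant or an element outside the parabolic to conclude that the subgroup generated is everything. Since $W(\text{E}_6)$, $W(\text{E}_7)$ and $W(\text{E}_8)$ have well-documented maximal subgroup structure, the cleanest route is probably to compute the order of $\langle x_i,y_i\rangle$ directly on the matrices and check it equals $|W(\text{E}_n)|$, which is the honest meaning of ``a straightforward computation'' here.

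For condition $(\dagger)$ I would again use the invariance of the trace under conjugation: I would tabulate $\mathrm{Tr}((x_1 y_1^{})^r)$, $\mathrm{Tr}(x_i^r)$, $\mathrm{Tr}(y_i^r)$ and so on for all relevant powers, and arrange the choice of generators so that the trace multisets arising from $\Sigma(x_1,y_1)$ and $\Sigma(x_2,y_2)$ meet only in the value corresponding to the identity. Where two nontrivial powers happen to share a trace I would fall back on a secondary invariant (for instance the determinant, the characteristic polynomial, or the fixed-space dimension) to separate the two conjugacy classes, exactly as the proofs of Propositions~\ref{BnEvenLem} and \ref{BnOddLem} separate diagonal from non-diagonal elements. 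Finally, for the strongly real conclusion I would produce in each case an involution (or an automorphism composed with an inner automorphism) conjugating each generator to its inverse; this is automatic in Coxeter-theoretic terms because $-1$ lies in $W(\text{E}_7)$ and $W(\text{E}_8)$, which immediately makes every element real, and one checks the remaining $\mathrm{E}_6$ case by exhibiting the conjugating element explicitly.

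The hard part will be the $\mathrm{E}_6$ case and, more generally, the bookkeeping for $(\dagger)$: unlike $\text{E}_7$ and $\text{E}_8$, the group $W(\text{E}_6)$ does not contain $-1$, so strong reality has to be witnessed by a genuine nontrivial diagram automorphism together with conjugation, and simultaneously the trace spectra of the two generating pairs must be forced disjoint by hand. I expect the generation checks to be routine once good generators are fixed, so the real content lies in selecting elements whose power-traces (or characteristic polynomials) are pairwise distinct across the two pairs while still satisfying the reality relations $x_i^{t_i}=x_i^{-1}$, $y_i^{t_i}=y_i^{-1}$; this is the step where an explicit computer-assisted calculation in the representations of \cite{Edin} is essentially unavoidable.
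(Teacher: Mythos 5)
Your overall strategy is the same as the paper's: the published proof is a one-sentence deferral to explicit matrices (Figures \ref{e6fig}, \ref{e7fig}, \ref{e8fig}), written in the representations of \cite[Section 6]{Edin} in which a symmetric-group parabolic acts by permutation matrices, together with the statement that verifying generation, condition ($\dagger$) and the relations $x_i^t=x_i^{-1}$, $y_i^t=y_i^{-1}$ is ``a straightforward computation.'' Your plan to choose one generator in the permutation parabolic where possible, to check generation and ($\dagger$) computationally (using traces with a secondary invariant as a tie-breaker), and to exhibit an explicit inverting element is exactly what the paper does.

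There is, however, one genuine error in your treatment of strong reality. You assert that because $-1\in W(\mathrm{E}_7)$ and $W(\mathrm{E}_8)$, every element is real and the strongly real condition is therefore ``automatic,'' with only $\mathrm{E}_6$ needing an explicit conjugating element. Reality of individual elements holds in \emph{every} finite Coxeter group (each element is a product of two reflections' worth of involutions, hence inverted by an involution), so the presence of $-1$ buys nothing here; more importantly, it is not what the definition demands. A strongly real Beauville structure requires a single element $t_i$ (or an automorphism composed with an inner one) satisfying $x_i^{t_i}=x_i^{-1}$ \emph{and} $y_i^{t_i}=y_i^{-1}$ simultaneously: the elements inverting $x_i$ form a coset of $C_G(x_i)$, those inverting $y_i$ a coset of $C_G(y_i)$, and for a generating pair these cosets have no reason to intersect. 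Equivalently, one needs the assignment $x_i\mapsto x_i^{-1}$, $y_i\mapsto y_i^{-1}$ to extend to an automorphism of the whole group, which is a nontrivial condition on the pair. This is why the paper exhibits an explicit matrix $t$ for all three of $\mathrm{E}_6$, $\mathrm{E}_7$ and $\mathrm{E}_8$, not just for $\mathrm{E}_6$; your proof as written would leave the strongly real claim unverified in the $\mathrm{E}_7$ and $\mathrm{E}_8$ cases.
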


\begin{proof}
It is a straightforward computation to verify that the matrices given in Figures \ref{e6fig}, \ref{e7fig} and \ref{e8fig} have the correct properties to be Beauville structures for these groups as well as the fact that $x_i^t=x_i^{-1}$ and $y_i^t=y_i^{-1}$ for $i=1,2$ in each case thereby showing that these are in fact strongly real Beauville structures.
\end{proof}

\begin{figure}

\[
x_1:=\left(\begin{array}{rrrrrr}
\cdot&\cdot&1&\cdot&\cdot&\cdot\\
1&\cdot&\cdot&\cdot&\cdot&\cdot\\
\cdot&\cdot&\cdot&\cdot&1&\cdot\\
\cdot&1&\cdot&\cdot&\cdot&\cdot\\
\cdot&\cdot&\cdot&\cdot&\cdot&1\\
\cdot&\cdot&\cdot&1&\cdot&\cdot\\
\end{array}\right)\mbox{, }
y_1:=\frac{1}{3}\left(\begin{array}{rrrrrr}
1&-2&1&-2&1&1\\
1&1&1&-2&1&-2\\
-1&-1&2&-1&-1&-1\\
\cdot&\cdot&3&\cdot&\cdot&\cdot\\
-2&1&1&-2&1&1\\
1&1&1&-2&-2&1\\

\end{array}\right)
\]

\[
x_2:=\frac{1}{3}\left(\begin{array}{rrrrrr}
-2&-2&1&1&1&1\\
1&-2&-2&1&1&1\\
\cdot&\cdot&\cdot&3&\cdot&\cdot\\
-2&1&-2&1&1&1\\
\cdot&\cdot&\cdot&\cdot&\cdot&3\\
\cdot&\cdot&\cdot&\cdot&3&\cdot
\end{array}\right)\mbox{, }
y_2:=\frac{1}{3}\left(\begin{array}{rrrrrr}
\cdot&3&\cdot&\cdot&\cdot&\cdot\\
-2&1&1&1&1&-2\\
\cdot&\cdot&3&\cdot&\cdot&\cdot\\
\cdot&\cdot&\cdot&3&\cdot&\cdot\\
-2&1&1&1&-2&1\\
1&1&1&1&-2&-2

\end{array}\right)
\]

\[
t:=\left(\begin{array}{rrrrrr}
\cdot&1&\cdot&\cdot&\cdot&\cdot\\
1&\cdot&\cdot&\cdot&\cdot&\cdot\\
\cdot&\cdot&\cdot&1&\cdot&\cdot\\
\cdot&\cdot&1&\cdot&\cdot&\cdot\\
\cdot&\cdot&\cdot&\cdot&\cdot&1\\
\cdot&\cdot&\cdot&\cdot&1&\cdot
\end{array}\right)
\]

\caption{The matrices verifying that $W$(E$_6)$ is a strongly real Beauville group.}
\label{e6fig}
\end{figure}

\begin{figure}

\[
x_1:=\frac{1}{3}\left(\begin{array}{rrrrrrr}
   1&  -2&   1&   1&  -2&   1&   1\\
     \cdot&     \cdot&     3&     \cdot&     \cdot&     \cdot&     \cdot\\
   1&   1&   1&   1&  -2&   1&  -2\\
     3&     \cdot&     \cdot&     \cdot&     \cdot&     \cdot&     \cdot\\
\cdot&     \cdot&     \cdot&     \cdot&     \cdot&     3&     \cdot\\
  1&  -2&   1&   1&   1&   1&  -2\\
\cdot&     \cdot&     \cdot&     3&     \cdot&     \cdot&     \cdot\\
\end{array}\right)
\mbox{, }
y_1:=\frac{1}{3}\left(\begin{array}{rrrrrrr}
1&1&-2&1&1&1&-2\\
1&1&-2&1&1&-2&1\\
\cdot&\cdot&\cdot&\cdot&3&\cdot&\cdot\\
\cdot&\cdot&\cdot&3&\cdot&\cdot&\cdot\\
-1&-1&-1&2&2&-1&-1\\
-2&1&-2&1&1&1&1\\
1&-2&-2&1&1&1&1
\end{array}\right)
  \]

  \[
x_2:=\left(\begin{array}{rrrrrrr}
\cdot&\cdot&\cdot&\cdot&\cdot&\cdot&-1\\
-1&\cdot&\cdot&\cdot&\cdot&\cdot&\cdot\\
\cdot&-1&\cdot&\cdot&\cdot&\cdot&\cdot\\
\cdot&\cdot&-1&\cdot&\cdot&\cdot&\cdot\\
\cdot&\cdot&\cdot&-1&\cdot&\cdot&\cdot\\
\cdot&\cdot&\cdot&\cdot&-1&\cdot&\cdot\\
\cdot&\cdot&\cdot&\cdot&\cdot&-1&\cdot\\
\end{array}\right)
\mbox{, }
y_2:=\frac{1}{3}\left(\begin{array}{rrrrrrr}
\cdot&\cdot&\cdot&\cdot&\cdot&3&\cdot\\
\cdot&\cdot&\cdot&\cdot&\cdot&\cdot&3\\
1&-2&-2&1&1&1&1\\
-2&1&-2&1&1&1&1\\
-1&-1&-1&-1&-1&2&2\\
1&1&-2&1&-2&1&1\\
1&1&-2&-2&1&1&1
\end{array}\right)
  \]

\[
t:=\left(\begin{array}{rrrrrrr}
\cdot&\cdot&\cdot&\cdot&\cdot&\cdot&1\\
\cdot&\cdot&\cdot&\cdot&\cdot&1&\cdot\\
\cdot&\cdot&\cdot&\cdot&1&\cdot&\cdot\\
\cdot&\cdot&\cdot&1&\cdot&\cdot&\cdot\\
\cdot&\cdot&1&\cdot&\cdot&\cdot&\cdot\\
\cdot&1&\cdot&\cdot&\cdot&\cdot&\cdot\\
1&\cdot&\cdot&\cdot&\cdot&\cdot&\cdot\\
\end{array}\right)
\]

\caption{The matrices verifying that $W$(E$_7)$ is a strongly real Beauville group.}
\label{e7fig}
\end{figure}

\begin{figure}

\[
x_1:=\left(\begin{array}{rrrrrrrr}
\cdot&\cdot&\cdot&\cdot&1&\cdot&\cdot&\cdot\\
1&\cdot&\cdot&\cdot&\cdot&\cdot&\cdot&\cdot\\
\cdot&\cdot&\cdot&\cdot&\cdot&\cdot&1&\cdot\\
\cdot&\cdot&\cdot&\cdot&\cdot&\cdot&\cdot&1\\
\cdot&\cdot&\cdot&\cdot&\cdot&1&\cdot&\cdot\\
\cdot&1&\cdot&\cdot&\cdot&\cdot&\cdot&\cdot\\
\cdot&\cdot&\cdot&1&\cdot&\cdot&\cdot&\cdot\\
\cdot&\cdot&1&\cdot&\cdot&\cdot&\cdot&\cdot\\
\end{array}\right)\mbox{, }
y_1:=\frac{1}{3}\left(\begin{array}{rrrrrrrr}
1&1&1&-2&1&1&-2&-2\\
1&1&1&-2&-2&1&-2&1\\
\cdot&\cdot&\cdot&-3&\cdot&\cdot&\cdot&\cdot\\
1&1&1&-2&-2&1&1&-2\\
\cdot&\cdot&\cdot&-3&\cdot&3&\cdot&\cdot\\
-1&2&-1&-1&-1&2&-1&-1\\
-1&-1&2&-1&-1&2&-1&-1\\
2&-1&-1&-1&-1&2&-1&-1
\end{array}\right)
\]

\[
x_2:=\left(\begin{array}{rrrrrrrr}
1&\cdot&\cdot&\cdot&\cdot&\cdot&\cdot&\cdot\\
\cdot&1&\cdot&\cdot&\cdot&\cdot&\cdot&\cdot\\
\cdot&\cdot&\cdot&\cdot&\cdot&1&\cdot&\cdot\\
\cdot&\cdot&1&\cdot&\cdot&\cdot&\cdot&\cdot\\
\cdot&\cdot&\cdot&1&\cdot&\cdot&\cdot&\cdot\\
\cdot&\cdot&\cdot&\cdot&\cdot&\cdot&1&\cdot\\
\cdot&\cdot&\cdot&\cdot&1&\cdot&\cdot&\cdot\\
\cdot&\cdot&\cdot&\cdot&\cdot&\cdot&\cdot&1\\
\end{array}\right)\mbox{, }
y_2:=\frac{1}{3}\left(\begin{array}{rrrrrrrr}
2&2&-1&-1&2&-1&-1&-1\\
2&2&-1&-1&-1&-1&-1&2\\
1&1&1&-2&1&-2&-2&1\\
1&1&-2&1&1&-2&-2&1\\
2&2&-1&-1&-1&-1&-1&-1\\
\cdot&3&\cdot&\cdot&\cdot&-1&\cdot&\cdot\\
1&1&-2&-2&1&-2&1&1\\
3&\cdot&\cdot&\cdot&\cdot&-3&\cdot&\cdot
\end{array}\right)
\]

\[
t:=\left(\begin{array}{rrrrrrrr}
\cdot&1&\cdot&\cdot&\cdot&\cdot&\cdot&\cdot\\
1&\cdot&\cdot&\cdot&\cdot&\cdot&\cdot&\cdot\\
\cdot&\cdot&\cdot&1&\cdot&\cdot&\cdot&\cdot\\
\cdot&\cdot&1&\cdot&\cdot&\cdot&\cdot&\cdot\\
\cdot&\cdot&\cdot&\cdot&\cdot&1&\cdot&\cdot\\
\cdot&\cdot&\cdot&\cdot&1&\cdot&\cdot&\cdot\\
\cdot&\cdot&\cdot&\cdot&\cdot&\cdot&1&\cdot\\
\cdot&\cdot&\cdot&\cdot&\cdot&\cdot&\cdot&1
\end{array}\right)
\]

\caption{The matrices verifying that $W$(E$_8)$ is a strongly real Beauville group.}
\label{e8fig}
\end{figure}

\subsection{The Coxeter group of type F$_4$}

\begin{lemma}
The group $W($\emph{F}$_4$) is not a Beauville group.
\end{lemma}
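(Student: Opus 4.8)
The plan is to exploit the abelianisation of $W(\mathrm{F}_4)$. First I would compute $W(\mathrm{F}_4)^{ab}$ directly from the Coxeter diagram of type $\mathrm{F}_4$, whose two outer bonds are labelled $3$ and whose central bond is labelled $4$: the two odd bonds identify $s_1$ with $s_2$ and $s_3$ with $s_4$ in the abelianisation, while the even central bond keeps these two pairs distinct, so $W(\mathrm{F}_4)^{ab}\cong C_2\times C_2$. Concretely this records three nontrivial linear characters, namely the sign character $\epsilon$ together with the ``long'' and ``short'' sign characters $\chi_\ell,\chi_s$ (with $\epsilon=\chi_\ell\chi_s$), whose kernels are the three index-$2$ subgroups. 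Writing $\pi\colon W(\mathrm{F}_4)\to C_2\times C_2$ for the quotient map, any generating pair $\{x,y\}$ must have $\pi(x),\pi(y)$ generating $C_2\times C_2$, and since $\pi(xy)=\pi(x)\pi(y)$ this forces $\{\pi(x),\pi(y),\pi(xy)\}$ to be \emph{exactly} the three nonzero elements of $C_2\times C_2$.

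The crucial consequence is an order restriction. Since $C_2\times C_2$ has exponent $2$, any element of odd order lies in $\ker\pi=[W(\mathrm{F}_4),W(\mathrm{F}_4)]$; equivalently, every element outside the commutator subgroup has even order. Hence in \emph{every} generating pair all three of $x$, $y$ and $xy$ have even order, so each possesses a well-defined involutory power $x^{o(x)/2}$, $y^{o(y)/2}$, $(xy)^{o(xy)/2}$. In particular both $\Sigma(x_1,y_1)$ and $\Sigma(x_2,y_2)$ of Definition \ref{MainDef} are forced to contain involutions. I would then organise the failure of $(\dagger)$ fibre-by-fibre over $C_2\times C_2$: the part of $\Sigma(x,y)$ lying in a given nontrivial coset of $[W(\mathrm{F}_4),W(\mathrm{F}_4)]$ is controlled entirely by the unique one of $x,y,xy$ mapping into that coset, whereas the involutions and the odd-order classes (of order $1$ or $3$) all collect in the coset $\ker\pi$.

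I would complete the argument by a pigeonhole count on conjugacy classes. The group $W(\mathrm{F}_4)$ has only a few involution classes (the long and the short reflections, the two-reflection products split according to root length, the three-reflection type, and the central class $\{-1\}$), distributed among the cosets as dictated by $\pi$; similarly its Sylow $3$-subgroup is elementary abelian of order $9$, so that the only odd element orders occurring are $1$ and $3$ and there are correspondingly few classes of order $3$. Because each generating system is forced, through its even-order (and order-divisible-by-$3$) powers, to contribute several of these scarce classes to its $\Sigma$-set, two systems ought not to be able to keep all such contributions disjoint, contradicting $(\dagger)$.

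The hard part is that this pigeonhole is not automatic: the three involutory powers of a single generating triple need not be three \emph{distinct} classes, and several of them may collapse onto the central class $\{-1\}$. Indeed $\pi(-1)=0$, since $-1$ is the product of the four reflections in a system of four mutually orthogonal short roots, hence a product of four short reflections and no long ones, so $\chi_\ell(-1)=\chi_s(-1)=1$; thus, unlike the $2\times H$ mechanism that governs the exceptional cases, the central involution lies in $[W(\mathrm{F}_4),W(\mathrm{F}_4)]$ and cannot by itself be read off as a forced common class. Controlling precisely which classes each generating pair must contribute, and ruling out the residual possibility of two exactly complementary systems, is where I expect the real work to lie; I would settle the finitely many remaining configurations using the explicit conjugacy-class and trace data exploited throughout Sections \ref{BnSec} and \ref{DnSec}, a computation that is entirely feasible since $|W(\mathrm{F}_4)|=1152$.
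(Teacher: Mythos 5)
Your setup coincides with the paper's: both arguments start from $W(\mathrm{F}_4)^{ab}\cong C_2\times C_2$, the three index-$2$ subgroups, and the observation that for any generating pair the images of $x$, $y$ and $xy$ must be the three non-trivial elements of the abelianisation, so each of $x$, $y$, $xy$ lands in a prescribed non-trivial coset of the derived subgroup $W'$ and has even order. You are also right that the central involution lies in $W'$, so the $2\times H$ mechanism used for $\mathrm{H}_3$ is unavailable here. The problem is that everything after this point --- which is the actual proof --- is left as a hope. You concede the pigeonhole ``is not automatic'' and that the real work is ``controlling precisely which classes each generating pair must contribute''; that control is exactly what the paper supplies and what your proposal omits. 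Concretely, the paper establishes two structural facts: (i) all elements of order $6$ in $H_y\setminus W'$ square into a single conjugacy class of order-$3$ elements (likewise for $H_{xy}$), and all elements of order $4$ outside $W'$ square into the single class 2B inside $W'$; and (ii) a generation check showing that, in the paper's $(a,b)$ notation, the group is not $(4,4)$-, $(4,2)$-, $(2,4)$-, $(6,2)$- or $(2,6)$-generated, so every generating triple must be $(6,4)$-, $(4,6)$- or $(6,6)$-generated. Together these force any two generating triples to share either the common order-$3$ class (if both use an order-$6$ element from the same subgroup $H_y$ or $H_{xy}$) or the class 2B (if each uses an order-$4$ element outside $W'$), violating condition $(\dagger)$.

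Without (i) and (ii) your count has no teeth: knowing only that each $\Sigma$-set contains involutions and perhaps order-$3$ elements does not preclude two systems hitting disjoint involution classes, and your closing sentence defers precisely the decisive case analysis to an unspecified computation. A brute-force search over generating pairs of a group of order $1152$ would of course settle the matter, and in that sense your plan is salvageable, but as written it identifies the right coordinate system while omitting the argument; the two facts (i) and (ii) are what you would need to extract from the class data to finish along your own lines.
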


\begin{proof}
Let $W:=W(F_4)$, let $W'$ be its derived subgroup and let $x$ and
$y$ be their standard generators (that is $x$ is in class 2C,
$o(y)=6$, $o(xy)=6$ and $o(xy^2)=4$). Since $W/W'=2^2$ there are
three index 2 subgroups, namely
\[
H_x:=\langle W',x \rangle\mbox{, }H_y:=\langle W',y\rangle\mbox{ and }H_{xy}:=\langle W',xy\rangle.
\]
Outside $W'$ each of $H_y$ and $H_{xy}$ contain a $W$-class of
elements of order 6 (represented by $y$ and $xy$ respectively) a
class consisting their cubes and for each of these there is a second
class consisting of the same elements multiplied by the central
involution (which is represented by $(xy^3xy)^6$). There is also a
$W$-class of elements of order 4 (represented by $[x,y]y$ and
$[x,y]xy$ respectively).

Consequently, all elements of order 6 in $H_y$ square to the same
class, as do all elements of order 6 in $H_{xy}$. All elements of
$W$ order 4 outside $W'$ power up to elements of class 2B, which is
contained in the derived subgroup. Consequently, if one generating
pair uses an element of order 6 from one of $H_y$ or $H_{xy}$ then
the other generating pair cannot use an element of order 6 from that
same subgroup. Writing $(a,b)-generated$ to mean `the group is
generated by an element of order $a$ from $H_y$ and an element of
order $b$ from $H_{xy}$' it turns out that searching through all
such elements, the group is not (4,4)-generated (4,2)-generated or
(2,4)-generated. Furthermore, the group is neither (6,2)-generated
nor (2,6)-generated. (Though it can be generated by an element of
order 6 from either $H_y$ or $H_{xy}$ along with a class 2C element
from $H_x$ such as the standard generators). Consequently, the group
can only be (6,4)-generated, (4,6)-generated or (6,6)-generated and
no combination of such pairs can form a Beauville structure. We
conclude that $W$ is not a Beauville group.
\end{proof}

\subsection{The Coxeter group of type H$_3$}

\begin{lemma}
The Coxeter group of type \emph{H}$_3$ is not a Beauville group
\end{lemma}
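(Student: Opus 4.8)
The plan is to exploit the exceptional smallness of $W($H$_3)$, which is the full symmetry group of the icosahedron, of order $120$, and is isomorphic to the direct product $C_2\times Alt(5)$. Here the central factor $C_2=\langle z\rangle$ is generated by the central involution $z$ (the matrix $-I$ in the reflection representation) and $Alt(5)$ is the simple rotation subgroup. I would prove that for \emph{every} generating pair $\{x,y\}$ of $G:=W($H$_3)$ one has $z\in\Sigma(x,y)$. Since $z$ is central its only conjugate is itself, so this single fact forces $z\in\Sigma(x_1,y_1)\cap\Sigma(x_2,y_2)$ for any candidate structure $\{\{x_1,y_1\},\{x_2,y_2\}\}$; as $z\neq e$, condition ($\dagger$) of Definition \ref{MainDef} can never hold and no Beauville structure exists.

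To organise the argument I would write $\epsilon\colon G\to C_2$ and $\pi\colon G\to Alt(5)$ for the two coordinate projections. Because $Alt(5)$ is simple it has no subgroup of index $2$, and in particular no quotient of order $2$; a short Goursat-type argument then shows that $\{x,y\}$ generates $G$ if and only if $\{\pi(x),\pi(y)\}$ generates $Alt(5)$ and at least one of $x,y$ lies outside $\{e\}\times Alt(5)$, i.e.\ has $\epsilon$-image $z$. This reduces everything to the three projections $\pi(x),\pi(y),\pi(xy)$ together with the parities recorded by $\epsilon$, which satisfy $\epsilon(xy)=\epsilon(x)\epsilon(y)$.

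The heart of the matter is the elementary observation that an element $(z^a,g)\in C_2\times Alt(5)$ has a power equal to $z$ precisely when $a=1$ and $o(g)$ is odd. Since the only even element order in $Alt(5)$ is $2$, an element lying over the nontrivial coset of $\{e\}\times Alt(5)$ fails to power up to $z$ exactly when its $Alt(5)$-part is an involution. I would then run the short case analysis on $(\epsilon(x),\epsilon(y))$: in each admissible case (at least one of $\epsilon(x),\epsilon(y)$ being $z$) demanding that none of $x$, $y$, $xy$ power up to $z$ forces two of the three projections $\pi(x),\pi(y),\pi(xy)$ to be involutions. As $\pi(xy)=\pi(x)\pi(y)$, these three elements generate the same subgroup $\langle\pi(x),\pi(y)\rangle$, and any two involutions generate a dihedral group; hence this subgroup would be dihedral, contradicting $\langle\pi(x),\pi(y)\rangle=Alt(5)$.

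The main obstacle is precisely this final dihedral step, and it is worth flagging why it is special to H$_3$: the whole argument hinges on $Alt(5)$ possessing no element of even order other than $2$. In the larger group $Sp_6(2)$ that appears in the analogous $C_2\times H$ description of $W($E$_7)$, elements of even order greater than $2$ are available, so one can avoid powering up to $z$ without being forced onto involutions. This is exactly why the same obstruction fails there, consistent with $W($E$_7)$ being realised as a strongly real Beauville group earlier in this section.
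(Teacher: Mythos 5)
Your proposal is correct and follows essentially the same route as the paper: identify $W(\mathrm{H}_3)$ with $2\times Alt(5)$ and show that in any generating pair at least one of $x$, $y$, $xy$ powers up to the central involution, so that ($\dagger$) fails. The paper simply asserts this last fact as ``clear''; your case analysis on the $C_2$-coordinates, using that the only even element order in $Alt(5)$ is $2$ and that two involutions generate a dihedral group, supplies exactly the justification the paper omits (and your closing remark correctly explains why the same obstruction does not apply to $W(\mathrm{E}_7)\cong 2\times Sp_6(2)$).
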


\begin{proof}
Since this group is isomorphic to $2\times Alt(5)$ it is clear that in any generating pair $g, h\in 2\times Alt(5)$ at least one of $g$, $h$ or $gh$ must power-up to the central involution. Consequently, condition ($\dagger$) of Definition \ref{MainDef} cannot be satisfied by any prospective Beauville structure since $\Sigma(x_1,y_1)\cap\Sigma(x_2,y_2)$ must contain the central involution.
\end{proof}

\subsection{The Coxeter group of type H$_4$}

As with the E$_n$ cases we will use an explicit representation of the group that has the helpful property of including all permutation matrices. Unlike those cases, such a representation does not appear in \cite{Edin}. We will use a representation in which the simple reflections correspond to the beautiful matrices given in Figure \ref{h4simples}.

\begin{figure}
\[
\left(\begin{array}{rrrr}
1&\cdot&\cdot&\cdot\\
\cdot&1&\cdot&\cdot\\
\cdot&\cdot&\cdot&1\\
\cdot&\cdot&1&\cdot
\end{array}\right)\mbox{, }
\left(\begin{array}{rrrr}
1&\cdot&\cdot&\cdot\\
\cdot&\cdot&1&\cdot\\
\cdot&1&\cdot&\cdot\\
\cdot&\cdot&\cdot&1
\end{array}\right)\mbox{, }
\left(\begin{array}{rrrr}
\cdot&1&\cdot&\cdot\\
1&\cdot&\cdot&\cdot\\
\cdot&\cdot&1&\cdot\\
\cdot&\cdot&\cdot&1
\end{array}\right)\mbox{ and }
\]
\[
-\frac{1}{4}\left(\begin{array}{rrrr}
1&\sqrt{5}&\sqrt{5}&\sqrt{5}\\
\sqrt{5}&-3&1&1\\
\sqrt{5}&1&-3&1\\
\sqrt{5}&1&1&-3
\end{array}\right).
\]

\caption{The matrices corresponding to the simple reflections in our representation of $W($H$_4)$.}
\label{h4simples}
\end{figure}

\begin{lemma}
The group $W($\emph{H}$_4)$ is a strongly real Beauville group.
\end{lemma}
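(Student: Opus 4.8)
The plan is to follow the template already used for $W(\mathrm E_n)$: exhibit four explicit matrices $x_1,y_1,x_2,y_2$ together with a single conjugating matrix $t$, all written in the representation of Figure \ref{h4simples}, and then verify by direct computation that $\{\{x_1,y_1\},\{x_2,y_2\}\}$ is a strongly real Beauville structure. This representation is tailor-made for the purpose: its first three simple reflections are the permutation matrices $(3,4)$, $(2,3)$ and $(1,2)$, so the parabolic subgroup they generate is the full group $Sym(4)$ of permutation matrices. I would build each generator as a short word in these permutation matrices and the fourth, ``exotic'' reflection, so that every $x_i$ and $y_i$ is an explicit $4\times4$ matrix over $\mathbb{Z}[\tfrac12,\sqrt5]$.

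First I would settle generation. The abelianisation of $W(\mathrm H_4)$ is $\mathbb{Z}/2$, so it has a unique subgroup of index $2$, namely the rotation subgroup $W^+\cong SL_2(5)\circ SL_2(5)$ of order $7200$, which is exactly the kernel of the determinant. It therefore suffices to check that each $\langle x_i,y_i\rangle$ is contained neither in $W^+$ nor in any maximal subgroup; in practice the cleanest route is to confirm that the generated subgroup has order $14400$. Note that to escape $W^+$ at least one generator of each pair must have determinant $-1$, and since $\det(g)^{o(g)}=\det(g^{o(g)})=1$ forces $\det(g)=+1$ for any odd-order element, at least one generator of each pair is forced to have even order.

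The genuine conceptual obstacle is condition~($\dagger$), and the difficulty is the one that defeats $W(\mathrm H_3)$: the longest element $w_0=-I$ is central in $W(\mathrm H_4)$, so if some power of each of $x_i,y_i,x_iy_i$ equals $-I$ for both $i$, then $-I\in\Sigma(x_1,y_1)\cap\Sigma(x_2,y_2)$ and ($\dagger$) fails. Here the even dimension saves us: since $\det(-I)=(-1)^4=+1$, an element $g$ of determinant $-1$ has $\det(g^{o(g)/2})=(-1)^{o(g)/2}$, so whenever $o(g)\equiv2\pmod4$ we get $g^{o(g)/2}\neq-I$. I would therefore take the determinant-$(-1)$ generators of order $2$, $6$ or $10$ and the remaining generators of odd order ($3$, $5$ or $15$); the only elements then requiring a separate check are the products $x_iy_i$, which I would arrange (or directly verify) not to power up to the centre. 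This is precisely the room that $W(\mathrm H_4)$ enjoys and that $W(\mathrm H_3)=2\times Alt(5)$ lacks: there $\det(-I)=-1$, and the only determinant-$(-1)$ elements avoiding the centre are the reflections, whence, as the lemma for $\mathrm H_3$ shows, some power of $x$, $y$ or $xy$ must always reach $-I$.

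With the centre disposed of, I would verify ($\dagger$) by the trace argument used for types B$_n$ and D$_n$: conjugate matrices share a trace, so it is enough to tabulate the traces of all nontrivial powers of $x_i,y_i,x_iy_i$ --- now algebraic integers in $\mathbb{Z}[\tfrac{1+\sqrt5}{2}]$ --- and confirm that no power occurring in the first triple is conjugate to a power occurring in the second. Where two traces coincide I would separate the classes using a finer invariant (the characteristic polynomial, the eigenvalue multiplicities, or simply whether an element is central), just as the diagonal-versus-non-diagonal distinction was used in the B$_n$ proof. Finally strong reality is a direct check: I would exhibit a single matrix $t$, take $\phi$ to be conjugation by $t$ with $g_1=g_2=e$, and verify $x_i^t=x_i^{-1}$ and $y_i^t=y_i^{-1}$ for $i=1,2$. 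I expect the trace bookkeeping over $\mathbb{Q}(\sqrt5)$ to be the most laborious part, but the real crux is the central-involution analysis of the preceding paragraph.
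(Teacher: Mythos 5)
Your proposal follows essentially the same route as the paper: the published proof simply exhibits explicit matrices $x_1,y_1,x_2,y_2$ and a single permutation matrix $t$ in the representation of Figure \ref{h4simples} (they are recorded in Figure \ref{h4fig}) and declares the verification of generation, condition ($\dagger$) and strong reality to be a straightforward computation. Your analysis of the central involution $-I$ via determinants and element orders is a sensible way to organise that computation and correctly identifies why H$_4$ succeeds where H$_3$ fails, but the one ingredient the proof actually consists of --- the concrete matrices --- still has to be written down.
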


\begin{proof}
It is a straightforward computation to verify that the matrices given in Figure \ref{h4fig} have the correct properties to be a Beauville structure for this group as well as the fact that $x_i^t=x_i^{-1}$ and $y_i^t=y_i^{-1}$ for $i=1,2$ thereby showing that this is in fact a strongly real Beauville structure.
\end{proof}

\begin{figure}

\[
x_1:=-\frac{1}{4}\left(\begin{array}{rrrr}
-3&1&1&\sqrt{5}\\
1&-3&1&\sqrt{5}\\
1&1&-3&\sqrt{5}\\
\sqrt{5}&\sqrt{5}&\sqrt{5}&1\\
\end{array}\right)
\mbox{, }
y_1:=\left(\begin{array}{rrrr}
\cdot&\cdot&1&\cdot\\
\cdot&\cdot&\cdot&1\\
\cdot&1&\cdot&\cdot\\
1&\cdot&\cdot&\cdot
\end{array}\right)
\]

\[
x_2:=\left(\begin{array}{rrrr}
\cdot&1&\cdot&\cdot\\
\cdot&\cdot&1&\cdot\\
1&\cdot&\cdot&\cdot\\
\cdot&\cdot&\cdot&1
\end{array}\right)\mbox{, }
y_2:=-\frac{1}{4}\left(\begin{array}{cccc}
\cdot&1+\sqrt{5}&-2&-1+\sqrt{5}\\
1+\sqrt{5}&\cdot&-1+\sqrt{5}&2\\
-1+\sqrt{5}&-2&-1-\sqrt{5}&\cdot\\
2&-1+\sqrt{5}&\cdot&-1-\sqrt{5}\\
\end{array}\right)
\]

\[
t:=\left(\begin{array}{rrrr}
\cdot&1&\cdot&\cdot\\
1&\cdot&\cdot&\cdot\\
\cdot&\cdot&1&\cdot\\
\cdot&\cdot&\cdot&1
\end{array}\right)
\]

\caption{The matrices verifying that $W$(H$_4)$ is a strongly real Beauville group.}
\label{h4fig}
\end{figure}

\section{Proofs}\label{proofs}

In this section we pull together the results of the earlier sections to give the overall proof of Theorem \ref{MainThm} and of Corollary \ref{MainCor}. We will also discuss some of the barriers to extending these results.

\begin{proof}
(of Theorem \ref{MainThm}) Part (a) was proved by Fuertes and Gonz\'{a}lez-Diez in \cite[Theorem 3]{FG}. Part (b) was proved in Section \ref{BnSec}; part (c) was proved in Section \ref{DnSec} and part (d) was proved in Section \ref{ExSec}. Finally part (e) is a simple observation first made by Bauer, Catanese and Grunewald in \cite[Lemma 3.7]{BCG1}.
\end{proof}


\begin{proof}
(of Corollary \ref{MainCor}) First note that the abelianisation of a direct product of $n$ irreducible Coxeter groups contains the elementary abelian group of order $2^n$. It follows that the direct product of more than two Coxeter groups is not even 2-generated, let alone a Beauville group, so a Coxeter Beauville group must either be irreducible or of the form $K_1\times K_2$ for some irreducible Coxeter groups $K_1$ and $K_2$.

The groups of type B$_n$ have the property that their abelianisation is the Klein foursgroup, so neither $K_i$ can be of this type.

For the remaining claim note that the Beauville structures constructed in Sections \ref{DnSec} and \ref{ExSec} as well as the Beauville structures previously constructed by the author in \cite[Lemma 19]{NewcProc} in the type A$_n$ case were constructed in such a way that not all of the elements $x_1$, $y_1$, $x_2$ and $y_2$ lie outside the derived subgroup and in particular these structures have the property that $\{\{(x_1,y_2^{-1}),(y_1,x_2^-1)\},\{(x_2,y_1^{-1}),(y_2,x_1^{-1})\}\}$ will generate the whole of $K_1\times K_2$. (Where possible we took the $x_i$s from outside the derived subgroup and the $y_i$s from inside, indeed only in the H$_4$ case did it seems difficult to find such a structure, but even in that case only $x_2$ lies inside the derived subgroup, so the elements of the product still lie in different cosets and thus generate the whole group.) Finally it is easy to see that we can `double up' the automorphisms used earlier to make this structure a strongly real one.
\end{proof}

The general question of classifying precisely which groups of the form $K_1\times K_2$ are strongly real Beauville groups is much more difficult. It is easy to see that neither $K_i$ can be of type F$_4$ nor of type I$_2(k)$, $k$ even, for the same reason that they cannot be of type B$_n$ but the general picture is much more complicated. For example, the below permutations give a strongly real Beauville structure for the group $W(\mbox{H}_3)\times W(\mbox{H}_3)$ (recall that $W(\mbox{H}_3)$ is not a Beauville group)
\[
x_1:=(1,2,3,4,5)(6,7)(8,9)(10,11)\mbox{, }
y_1:=(1,2)(3,4)(8,9,10,11,12)(13,14),
\]
\[
x_2:=(1,2,3)(8,9)(10,11)(13,14)\mbox{, }
y_2:=(1,4)(2,5)(6,7)(8,10,12)(13,14)
\]
whilst the following permutations give a strongly real Beauville structure for the group $W($A$_4)\times W($I$_2(3))$
\[
x_1:=(1,2)(3,4,5)(6,7,8)\mbox{, }
y_1:=(4,1)(2,3)(6,7)
\]
\[
x_2:=(1,2,3,4)(6,7,8)\mbox{, }
y_2:=(2,3,4,5)(6,7).
\]
It is also easy to see that neither $W($A$_4)\times W($I$_2(5))$ nor $W($A$_5)\times W($I$_2(5))$ are Beauville groups since for any generating pair $x$ and $y$ we have that $\Sigma(x,y)$ must contain elements of order 5 that belong solely in the `I$_2$ part' of the group. Similarly if $k$ is coprime to $|K_1|$, then $K_1\times W($I$_2(k))$ cannot be a Beauville group, nor can $W($I$_2(k))\times W($I$_2(k'))$ be a Beauville group for any $k$ and $k'$. This naturally poses the following.

\begin{q}
Which of the groups $K_1\times K_2$ are (strongly real) Beauville groups.
\end{q}

\begin{conj}
A Coxeter group is a Beauville group if and only if it is a strongly real Beauville group.
\end{conj}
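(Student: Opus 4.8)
The forward implication is immediate from the definitions: a strongly real Beauville structure is in particular a Beauville structure, so any strongly real Beauville Coxeter group is certainly a Beauville group. All the content therefore lies in the converse, and this I would first reduce using the analysis already carried out for Corollary \ref{MainCor}. A Coxeter group that is a Beauville group must be either irreducible or a direct product $K_1\times K_2$ of two irreducible factors, neither of which has Klein-four abelianisation (so in particular neither is of type B$_n$, F$_4$, nor I$_2(k)$ with $k$ even, since the product would otherwise fail to be $2$-generated). The irreducible case is already settled by the corollary asserting that an irreducible Coxeter group is a Beauville group if and only if it is strongly real. Thus the whole of the conjecture reduces to the product case, and in fact to the part \emph{not} covered by Corollary \ref{MainCor}: those products whose Beauville structure does not arise by combining Beauville structures of the individual factors, the phenomenon exhibited by $W(\mathrm{H}_3)\times W(\mathrm{H}_3)$ and by $W(\mathrm{A}_4)\times W(\mathrm{I}_2(3))$, in which a factor need not be a Beauville group at all.

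The plan for this residual case is to exploit the fact that finite Coxeter groups are strongly real as abstract groups: every element is inverted by an involution, and one has at one's disposal a plentiful supply of inverting data in the form of the simple reflections, the longest element $w_0$, and the opposition diagram automorphism $w\mapsto w_0 w w_0$. Given a Beauville structure $\{\{x_1,y_1\},\{x_2,y_2\}\}$ of $K_1\times K_2$, I would classify the possible generating pairs of the product via Goursat's lemma, describing the subdirect identification between $K_1$ and $K_2$ that each generating pair induces (a genuine diagonal identification being possible only when $K_1\cong K_2$). For each pair $\{x_i,y_i\}$ one then seeks a single element $g_i$ and a single automorphism $\phi\in\mathrm{Aut}(K_1\times K_2)$ simultaneously inverting $x_i$ and $y_i$, with the components of $\phi$ assembled from the inverting automorphisms of the factors together with the factor-swap in the isomorphic case. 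This is exactly the pattern by which the explicit strongly real structures above were produced.

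The hard part will be the simultaneity, on two fronts. First, strong reality of a \emph{single} generating pair is not automatic even inside a strongly real group, so one must show that the inverting involutions of each factor can always be chosen to invert both members of a pair at once; and second, the Beauville condition demands one fixed automorphism $\phi$ that works for \emph{both} $i=1,2$ simultaneously. When the structure genuinely mixes the factors — as in the $W(\mathrm{H}_3)^2$ example, where a diagonal identification forces $\phi$ to interchange the two copies — the inverting data that succeed for the two pairs separately need not be reconcilable with one such global $\phi$. Controlling this compatibility uniformly across all admissible Goursat identifications, rather than verifying it case by case as in the worked examples, is where the real difficulty resides; a realistic intermediate goal would be to settle the isomorphic-factor case $K\times K$ first, and to confirm the small sporadic products computationally, before attempting the general statement. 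It is precisely this obstruction that makes the result a conjecture rather than a theorem.
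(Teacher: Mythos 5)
This statement appears in the paper only as a conjecture, with no proof offered, and your proposal does not supply one either. The parts you do establish are correct but are already in the paper: the forward implication is trivial, the reduction to at most two irreducible factors and the exclusion of factors with Klein-four abelianisation follow from the abelianisation argument in the proof of Corollary \ref{MainCor}, and the irreducible case is the corollary stated immediately after Theorem \ref{MainThm}. Everything of substance therefore sits in the residual case of products $K_1\times K_2$ not covered by Corollary \ref{MainCor}, and there your argument stops exactly where a proof would have to begin.

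The concrete gap is the step you yourself flag as ``the hard part.'' The fact that every element of a finite Coxeter group is inverted by an involution (strong reality of the group as an abstract group) is a statement about single elements; it does not produce an automorphism $\phi$ and element $g_i$ inverting \emph{both} members of a generating pair simultaneously, and still less one $\phi$ that works for both pairs $i=1,2$ of a Beauville structure, which is what the definition of strongly real demands. The Goursat decomposition of generating pairs of $K_1\times K_2$ is sensible bookkeeping but by itself manufactures no inverting data, and in the genuinely mixed cases (such as the $W(\mathrm{H}_3)\times W(\mathrm{H}_3)$ example, where neither factor is itself a Beauville group) there is no structure on the factors to fall back on. Note also that the paper leaves open even the prior question of \emph{which} products $K_1\times K_2$ are Beauville groups --- it records non-examples such as $W(\mathrm{A}_4)\times W(\mathrm{I}_2(5))$ and poses the classification as an open question --- so the class of groups over which your simultaneity argument would have to range is not yet determined. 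As it stands the proposal is a research programme rather than a proof, as your own closing sentence concedes; it neither proves the conjecture nor identifies a route that demonstrably terminates.
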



We remark that the author had previously proved the special case of the groups $W(\mbox{A}_n)\times W(\mbox{A}_n)$ in \cite[Lemma 19]{NewcProc}.

\section{The mixed case}

In this final section we discuss attempts to use Coxeter groups to construct mixed Beauville groups.

\subsection{Mixed Beauville groups}

Recall from Section \ref{Sec1} that a Beauville group acts on a product of Riemann surfaces $\mathcal{C}\times\mathcal{C}'$. The structures in Definition \ref{MainDef} do not interchange $\mathcal{C}$ with $\mathcal{C}'$. It is interesting to ask when the group can interchange the two curves and if this can be be translated into group theoretic terms. It turns out that the answer is yes and this happens precisely when a group has a so-called mixed Beauville structure that we define as follows \cite{BCG2}.

\begin{definition} \label{mixed}
Let $G$ be a finite group. A \emph{mixed Beauville quadruple} for $G$ is a quadruple $(G^0;a,c;g)$ consisting of a subgroup $G^0$ of index $2$ in $G$; of elements $a,c\in G^0$ and of an element $g\in G$ such that
\begin{itemize}
\item $G^0$ is generated by $a$ and $c$;
\item $g\not\in G^0$;
\item for every $\gamma\in G^0$ we have that $(g\gamma)^2\not\in\Sigma(a,c)$ and
\item $\Sigma(a,c)\cap\Sigma(a^g,c^g)=\{e\}$.
\end{itemize}
If $G$ has a mixed Beauville quadruple, then we say that $G$ is a \emph{mixed Beauville group} and call $(G^0;a,c;g)$ a \emph{mixed Beauville structure} of $G$.
\end{definition}

Mixed Beauville groups are much more difficult to construct than their unmixed counterparts. In slightly different terms, \cite[Lemma 5]{FG} states the
following.

\begin{lemma}\label{FGLem}
Let $G$ and $G^0$ be as in part Definition \ref{mixed}.
Then the order of any element in $G\setminus G^0$ is a multiple of
$4$.
\end{lemma}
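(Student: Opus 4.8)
The plan is to extract everything from the third bullet of Definition~\ref{mixed}, namely that $(g\gamma)^2\notin\Sigma(a,c)$ for every $\gamma\in G^0$, combined with the trivial but crucial fact that $e\in\Sigma(a,c)$ (take $i=|G|$ in Definition~\ref{MainDef}). First I would note that since $G^0$ has index $2$, the nontrivial coset is $G\setminus G^0=gG^0$, so every element $h\in G\setminus G^0$ can be written as $h=g\gamma$ for some $\gamma\in G^0$. The third bullet then gives $h^2=(g\gamma)^2\notin\Sigma(a,c)$, and since $e\in\Sigma(a,c)$ this forces $h^2\neq e$. In other words, no element of $G\setminus G^0$ is an involution (nor, trivially, the identity).

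Next I would run a parity argument on the order of an arbitrary $h\in G\setminus G^0$. Because $G/G^0$ is cyclic of order $2$ and $h$ maps to its nontrivial element, a power $h^k$ lies in $G^0$ if and only if $k$ is even; applying this to $h^{o(h)}=e\in G^0$ shows that $o(h)$ is even, say $o(h)=2t$. It then remains to prove that $t$ is even. Suppose instead that $t$ is odd. Then $h^t$ again lies in $G\setminus G^0$, while $(h^t)^2=h^{2t}=e$ and $h^t\neq e$ (as $t<o(h)$), so $h^t$ would be an involution in $G\setminus G^0$. This contradicts the conclusion of the previous paragraph. Hence $t$ is even and $4\mid o(h)$, as required.

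I do not expect a genuine obstacle here: the whole argument is elementary once one observes that the membership condition $(g\gamma)^2\notin\Sigma(a,c)$ secretly rules out involutions outside $G^0$, via $e\in\Sigma(a,c)$. The only point demanding a little care is the bookkeeping that $h^t$ still lies outside $G^0$ when $t$ is odd, which is immediate from the index-$2$ description of the nontrivial coset. This is essentially the reasoning behind \cite[Lemma~5]{FG}, recast in the present notation.
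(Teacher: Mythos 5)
Your proof is correct: the key observations that $e\in\Sigma(a,c)$ (take $i=|G|$), that the third bullet of Definition \ref{mixed} therefore forbids involutions in the coset $gG^0=G\setminus G^0$, and that an element of even order $2t$ with $t$ odd would power up to such an involution, together give exactly the standard argument. The paper itself supplies no proof of this lemma, citing it directly from \cite[Lemma 5]{FG}, and your reasoning is the same as the one behind that reference, so there is nothing to reconcile.
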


Fuertes and G. Gonz\'{a}lez-Diez used the above lemma to show that
the symmetric groups do not contain mixed Beauville structures since
they contain transpositions which of course have order 2.

Here we note that this simple observation immediately carries over
to Coxeter groups in general: if $G$ is of type D$_n$, E$_6$, E$_7$, E$_8$, H$_3$ and H$_4$ then reflections have order 2 and lie outside the only index 2 subgroup and so by Lemma \ref{FGLem} these cannot be mixed Beauville groups.

In the case of the groups of type B$_n$ the abelianisation is the Klein foursgroup so there are three subgroups of index 2. Let $l$ be a
reflection in one of the long root associated with B$_n$ and let $s$
be a reflection in one of the corresponding short roots. It may be
shown that if $W'$ is the derived subgroup then the three subgroups
are $\langle W',l\rangle$, $\langle W',s\rangle$ and $\langle
W',ls\rangle$. In each case there is clearly an element outside the
derived subgroup whose order is not a multiple of 4, so no Coxeter
groups of type B$_n$ of index at most 2 can be made into mixed
Beauville groups. A similar argument shows that the Coxeter group of type F$_4$ is not a mixed Beaville group.

\subsection{Mixable Beauville groups}

In \cite{FP} the author and Pierro introduced the notion of `mixable' Beauville groups that we define as follows.

\begin{definition}\label{MixableDef}
Let $H$ be a finite group. Let $a_1,c_1,a_2,c_2\in H$ and define $\nu(a_i,c_i)=o(a_i)o(c_i)o(a_ic_i)$ for $i=1,2$. If
\begin{enumerate}
\item $o(a_1)$ and $o(c_1)$ are even;
\item $\langle a_1^2,a_1c_1,c_1^2\rangle=H$;
\item $\langle a_2,c_2\rangle=H$ and
\item $\nu(a_1,c_1)$ is coprime to $\nu(a_2,c_2)$,
\end{enumerate}
then we say that $H$ is a \emph{mixable Beauville group}.
\end{definition}

(The above definition is slightly different to that given in \cite{FP} where the definition is restricted to perfect groups. This restriction enables us to replace condition (2) with the slightly simpler condition that $\langle a_1,c_1\rangle=H$. Here we prefer this more general notion as no Coxeter group is perfect since the reflections always lie outside the derived subgroup.)

The reason for introducing these concepts is as follows. If we consider the group $G:=(H\times H):Q_{4k}$ where $Q_{4k}$ is the dicyclic group of order $4k$ and $k>1$ is an integer coprime to gcd$(o(a_1),o(c_1))$, then whenever $H$ is a mixable Beauville group we have that $G$ is a mixed Beauville groups.

\begin{lemma}
No Coxeter group is a mixable Beauville group.
\end{lemma}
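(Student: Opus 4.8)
The plan is to combine the coprimality requirement (4) of Definition \ref{MixableDef} with the elementary but decisive fact that the abelianisation of any non-trivial Coxeter group is a non-trivial elementary abelian $2$-group. The whole difficulty of the statement evaporates once one notices that conditions (1) and (4) force the pair $a_2,c_2$ to consist of odd-order elements, which is incompatible with generating a group whose abelianisation is a $2$-group.

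First I would unwind the numerical conditions. By condition (1) both $o(a_1)$ and $o(c_1)$ are even, so $o(a_1)o(c_1)$ is divisible by $4$ and hence $\nu(a_1,c_1)=o(a_1)o(c_1)o(a_1c_1)$ is even. By condition (4) the integer $\nu(a_2,c_2)$ must then be coprime to an even number, hence odd; since $\nu(a_2,c_2)=o(a_2)o(c_2)o(a_2c_2)$, this forces each of $o(a_2)$, $o(c_2)$ and $o(a_2c_2)$ to be odd. In particular $a_2$ and $c_2$ are elements of odd order.

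Next I would invoke the structure of the abelianisation. For a Coxeter group $W$ with simple reflections $s_i$, every $s_i$ maps to an involution (or the identity) in $W/[W,W]$, so $W/[W,W]$ is an elementary abelian $2$-group; for non-trivial $W$ it is non-trivial, being isomorphic to $(\mathbb{Z}/2)^c$ where $c\geq 1$ is the number of connected components of the Coxeter graph after deleting the edges carrying an odd label (see \cite{Hump}). Since $a_2$ and $c_2$ have odd order, their images in the $2$-group $W/[W,W]$ have odd order and are therefore trivial, so $a_2,c_2\in[W,W]$. Consequently $\langle a_2,c_2\rangle\leq[W,W]\subsetneq W$, contradicting condition (3). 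Hence no Coxeter group can carry data satisfying Definition \ref{MixableDef}, as claimed. The argument presents no genuine obstacle: the only point requiring (standard) care is the claim that the abelianisation is a non-trivial $2$-group, after which the conclusion is pure divisibility bookkeeping.
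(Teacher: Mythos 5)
Your proposal is correct and is essentially the paper's own argument: the paper likewise observes that every element outside the derived subgroup of a Coxeter group has even order (equivalently, odd-order elements lie in the derived subgroup, since the abelianisation is an elementary abelian $2$-group), and that conditions (1) and (4) force the second generating pair to consist of odd-order elements, which therefore cannot generate. You have merely spelled out the underlying reason and routed the contradiction through condition (3) rather than (4), which is the same argument.
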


\begin{proof}
Any generating set for a group must contain at least one element from outside its derived subgroup. In any Coxeter group, every element outside the derived subgroup has even order, so it is impossible to find generating pairs that satisfy condition (4) of Definition \ref{MixableDef}.
\end{proof}

\textbf{Acknowledgement} The author wishes to express his deepest gratitude to his colleague Professor Sarah Hart and to Professor Christopher Parker for extremely helpful comments on earlier drafts of this work.

\end{document}